\documentclass[12pt,fleqn]{amsart}
%\documentclass[10pt,a4paper]{article}

%%%%%%%%%%%%%%%%%%%Formating page
\textwidth=36cc
\baselineskip 16pt
\textheight 620pt
\headheight 20pt
\headsep 10pt
\topmargin 0pt
\footskip 40pt
\parskip 0pt
\oddsidemargin 10pt
\evensidemargin 10pt

\usepackage{enumerate}
\usepackage{etex}
\reserveinserts{28}
\usepackage[latin1]{inputenc}
\usepackage{amsmath}
\usepackage{amsfonts}
\usepackage{amssymb}
\usepackage{color}                    % For creating coloured text and background
\usepackage{hyperref}                 % For creating hyperlinks in cross references
\usepackage[T1]{fontenc}
%\usepackage{lipsum}
%\usepackage{eucal}
%\usepackage{wrapfig}
%\usepackage{pstricks}
%\usepackage{pst-plot}
%\usepackage{pgfplots}
%\usepackage{url}
%\usepackage{chngcntr}
%\usepackage{cancel}
%\DeclareMathAlphabet{\mathpzc}{OT1}{pzc}{m}{it}
%\DeclareMathAlphabet{\mathcal}{OMS}{cmsy}{m}{n}

\newtheorem{THEO}{Theorem}[section]
\newtheorem{PROP}[THEO]{Proposition}
\newtheorem{CORO}[THEO]{Corollary}
\newtheorem{LEMM}[THEO]{Lemma}

\newtheorem{DEFI}[THEO]{Definition}

\newtheorem{CLAI}[THEO]{Claim}
\newtheorem{PROB}[THEO]{Problem}

\theoremstyle{definition}
\newtheorem{EXAM}[THEO]{Example}

\def\({\left(}
\def\){\right)}

%%%%%%%%%%%%%Boolean algebras
\def\fA{\mathfrak A}
\def\fB{\mathfrak B}
\def\fC{\mathfrak C}

\def\fF{\mathfrak F}
%%%%%%%%%%%%%%%%%%others by GP
\newcommand{\sm}{\setminus}

\newcommand{\sub}{\subseteq}

\newcommand{\vf}{\varphi}
\newcommand{\eps}{\varepsilon}

\newcommand{\cA}{{\mathcal A}}

\newcommand{\cE}{{\mathcal E}}
\newcommand{\cF}{{\mathcal F}}
\newcommand{\cG}{{\mathcal G}}

\newcommand{\cJ}{{\mathcal J}}

\newcommand{\cS}{\protect{\mathcal S}}

\newcommand{\cI}{\mathcal I}
\newcommand{\btu}{\bigtriangleup}
 \newcommand{\wh}{\widehat}

\newcommand{\la}{\langle}
\newcommand{\ra}{\rangle}
%%%%%%%%%%%%%%%%%%%
\def\rto{\rightarrow}

\def\CC{\mathcal{C}}

\def\AA{\mathcal{A}}

\def\FF{\mathcal{F}}

\def\PP{\mathcal{P}}

\def\1{\textbf{1}}

\DeclareMathOperator{\SPAN}{\overline{span}}

\newcommand{\Suc}[2]{\ensuremath{\left({#1}\right)_{{#2}=1}^\infty}}

\def\ult{\operatorname{ult}}
\def\Clop{\operatorname{Clop}}
\def\mod{\operatorname{mod }}
\def\dom{\operatorname{dom}}

\def\w{\omega}

%opening
\title[WRN algebras and independent sequences]{Weakly Radon-Nikod\'ym  Boolean algebras and independent sequences}
\author[A. Avil\'{e}s]{Antonio  Avil\'{e}s}
\address{Departamento de Matem\'{a}ticas\\
Facultad de Matem\'{a}ticas\\ Universidad de Murcia\\ 30100 Espinardo, Murcia\\
Spain} \email{avileslo@um.es}
\author[G.\ Mart\'{\i}nez-Cervantes]{Gonzalo Mart\'{\i}nez-Cervantes}
\address{Departamento de Matem\'{a}ticas\\
Facultad de Matem\'{a}ticas\\ Universidad de Murcia\\ 30100 Espinardo, Murcia\\
Spain}
 \email{gonzalo.martinez2@um.es}
\author[G. Plebanek]{Grzegorz Plebanek}
\address{Instytut Matematyczny\\ Uniwersytet Wroc\l awski\\ Pl.\ Grunwaldzki 2/4\\
50-384 Wroc\-\l aw\\ Poland}
\email{grzes@math.uni.wroc.pl}

\subjclass[2010]{Primary 03G05,O6E15,28A60; Secondary 46B22, 46B50.}

\keywords{Boolean algebra, Stone space, independent sequence, weakly Radon-Nikod\'ym compact}
\thanks{A.\ Avil\'{e}s and G.\ Mart\'{\i}nez-Cervantes were partially supported by  the research project 19275/PI/14 funded by Fundaci\'{o}n S\'{e}neca - Agencia de Ciencia y Tecnolog\'{i}a de la Regi\'{o}n de Murcia within the framework of PCTIRM 2011-2014 and by Ministerio de Econom\'{i}a y Competitividad and FEDER (project MTM2014-54182-P). \\
G. Plebanek was partially supported by NCN grant 2013/11/B/ST1/03596 (2014-2017)}

\begin{document}

\begin{abstract}
A compact space is said to be weakly Radon-Nikod\'{y}m (WRN) if it can be weak*-embedded into the dual of a Banach space not containing $\ell_1$. We investigate WRN Boolean algebras, i.e. algebras whose Stone space is WRN compact. We show that the class of WRN algebras and the class of minimally generated algebras are incomparable. In particular, we construct a minimally generated nonWRN Boolean algebra whose Stone space is a separable Rosenthal compactum, answering in this way a question of W. Marciszewski.

We also study questions of J. Rodr\'{i}guez and R. Haydon concerning measures and the existence of nontrivial convergent sequences on WRN compacta, obtaining partial results on some natural subclasses.
\end{abstract}
\maketitle

\section{Introduction}

A compact space is said to be Radon-Nikod\'{y}m (RN) if it is homeomorphic to a weak$^\ast$-compact subset of a dual Banach space with the Radon-Nikod\'{y}m property. C.\ Stegall proved that the dual $X^\ast$ of a Banach space $X$ has the Radon-Nikod\'{y}m property if and only if $X$ is Asplund, i.e.\ every separable subspace of $X$ has separable dual.
The class of RN compacta has been widely studied since it was introduced by I. Namioka in \cite{Nam87}. Some remarkable results concerning this class and its relation with other classes of compact spaces such as Corson compacta and Eberlein compacta can be found in \cite{OSV91}, \cite{Arv02}, \cite{Ste91} and \cite{AK13}.
In a similar way, E. Glasner and M. Megrelishvili defined in \cite{GM12} the class of weakly Radon-Nikod\'ym compact spaces (WRN). These are compact spaces which can be weak$^\ast$ embedded into the dual of a Banach space not containing $\ell_1$. Every Eberlein compact space is RN and every RN compact space is WRN. Glasner and Megrelishvili proved that linearly ordered compact spaces are WRN. Thus, the double arrow is an example of a WRN compact space which is not RN (see \cite[Example 5.9]{Nam87}). On the other hand, it follows from a result of Talagrand \cite{Tal81} that $\beta \w$ is not WRN (another proof of S.\ Todorcevic is included in \cite[Appendix]{GM14};
cf.\ Corollary \ref{wrn:4} below). Some other results about this class can be found in \cite{MC}.

\bigskip

In this paper we introduce a notion of a  weakly Radon-Nikod\'ym Boolean algebra (WRN) ---  an  algebra $\fA$ is  WRN if  its
Stone space is weakly Radon-Nikod\'ym compact. In fact our basic definition is given in a purely Boolean language and says that
$\fA$ is WRN if $\fA$ is generated by a family that may be decomposed into countably many pieces, of which none contains an
infinite independent sequence (see Definition \ref{wrn:1}).

WRN algebras are, in a sense, small and share some properties with the class of minimally generated algebras introduced by  Koppelberg \cite{Ko89}.
We show, however, that there is a WRN algebra which is not minimally generated, see Example \ref{mg:3}. Our example of
a minimally generated algebra which is not WRN is more involved, see section \ref{ex}.  The algebra $\fA$ we construct here has the additional property
that its Stone space $K$ is separable Rosenthal compact. In this way we answer a question communicated to us  by W.\  Marciszewski, see Theorem  \ref{ex:4}.

By strengthening the property WRN we define two seemingly natural subclasses: we call the first one uniformly weakly Radon-Nikod\'ym algebras (UWRN),
see Definition \ref{swrn:1}, and the latter is denoted by SWRN. An algebra $\fA$ is SWRN if it is generated by some family containing no infinite independent sequence,
see Definition \ref{swrn:5}. We show that the classes UWRN and SWRN are incomparable, see Proposition \ref{swrn:6} and
Corollary \ref{ncs:6}.

The class of WRN algebras is hereditary. To show that the class of UWRN is also stable under taking subalgebras we prove in section \ref{pwi} a result on
families containing only short independent sequences; our Theorem \ref{pwi:1} is quantitative in nature and is based on the Sauer-Shelah lemma (cf.\   Appendix A).

One of themes under consideration is properties of finitely additive measures on Boolean algebras.
A family $\cE$ in a Boolean algebra contains an infinite independent sequence if and only if contains a sequence separated by some measure, see Lemma \ref{pr:1}.
Using this fact we show in Theorem \ref{pwi:2} that if $\cE$ contains no infinite independent sequence then the same property has the image of $\cE$ under any Boolean polynomial. In section \ref{me} we discuss  two problems on measures on WRN algebras we were not able to resolve.
They are related to a question posed by J.\ Rodr\'{\i}guez, if every Borel measure defined on a WRN compact space is concentrated on a separable subspace.
 Rodr\'{\i}guez's question is connected with his result on measures of convex WRN compacta, which we enclose in Appendix B.

 Finally, we discuss a problem posed by R. Haydon, if every WRN compact space contains a nontrivial converging sequence (Problem \ref{ncs:1}).
 We prove that if $\fA$ is a UWRN algebra then its Stone space is even sequentially compact (Theorem \ref{ncs:4}).
 This implies that an algebra $\fF$ considered  by  Haydon  \cite{H81} is in SWRN but not in UWRN.

We wish to thank Witold Marciszewski and Jos\'e Rodr\'{\i}guez for fruitful discussions; we are grateful  to J.\ Rodr\'{\i}guez   for his consent to include  Theorem \ref{ap:2}  here.

\section{Preliminaries}\label{pr}

We shall consider abstract Boolean algebras $\fA,\fB,\ldots$, keeping the usual set-theoretic notation. In particular,  $a^c$ denotes the complement of $a\in\fA$, but we shall also write $a^1 = a$ and $a^0 = a^c$ when convenient.
Recall that, an indexed family $\{a_i:i\in I\}$ in an algebra $\fA$ is {\em independent} if
\[\bigcap_{i\in I'}a_i\cap \bigcap_{i\in I''} a_i^c\neq 0,\]
 for every pair $I', I''$ of finite disjoint subsets of $I$.

Given an algebra $\fA$ and any $\cG\sub\fA$, we denote by $\la \cG\ra$ the algebra generated by $\cG$,  the smallest subalgebra of $\fA$ containing $\cG$.
If $\la \cG\ra =\fA$ then $\cG$ is called a generating family.

For an algebra $\fA$, by $\ult(\fA)$ we denote its Stone space (of ultrafilters on $\fA$) and $\fA\ni a\to \widehat{a}\in\Clop(\ult(\fA))$ is the Stone isomorphism
between $\fA$ and the algebra of clopen subsets of its Stone space.

By a measure $\mu$ on an algebra $\fA$ we mean a finitely additive nonnegative probability functional $\fA\to [0,1]$.

\begin{DEFI}\label{pr:0}
Let $\mu$ be a measure on an algebra $\fA$. We say that the measure $\mu$
\begin{enumerate}[(a)]
\item is {\em nonatomic} if for every $\eps>0$ there are $n\ge 1$ and  a finite partition $\{a_1,\ldots, a_n\}$ of ${\bf 1}_\fA$
such that $\mu(a_i)<\eps$ for every $i\le n$.
\item  has {\em countable type} if there is a countable subalgebra $\fB$ of $\fA$ such that for every $a\in\fA$ we have
\[\inf\{\mu(a\btu b): b\in\fB\}=0.\]
\item  is {\em strongly countably determined} if there is a countable subalgebra $\fC \sub \fA$ such that for every
$a\in\fA$ we have
\[ \mu(a)=\sup\{\mu(c): c\in\fC, c\sub a\}.\]
\end{enumerate}
\end{DEFI}

This kind of properties of measures on Boolean algebras are discussed in \cite{BN07} and \cite{KP11}.
Clearly a strongly countably determined measure has countable type; recall that the reverse implication does not hold in general.

We shall say that a family $\cE$ (of sets or elements of some Boolean algebra) is $\eps$-separated by a (finitely additive) measure $\mu$ if
$\mu(a\btu b)\ge\eps$ for all distinct $a,b\in\cE$ (so, in particular, $\cE$ should be contained in the domain of $\mu$).
Note that if we are interested in properties of $\cE$ defined in terms of finite Boolean operations then we can in fact assume that $\mu$ is countably additive:
if $\cE$ is contained in a Boolean algebra $\fA$ then we can switch to the family $\wh{\cE}=\{\wh{a}: a\in\cE\}$ of clopen subsets of the Stone
space $\ult(\fA)$ and consider $\wh{\mu}$, where $\wh{\mu}(\wh{a})=\mu(a)$ for $a\in\fA$.

\begin{LEMM}[\cite{FP04}] \label{pr:1}
If $\eps>0$ and $\cE$ is an infinite family $\eps$-separated by some measure $\mu$ then $\cE$ contains an infinite independent sequence.
\end{LEMM}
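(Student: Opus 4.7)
The plan is to use Rosenthal's $\ell_1$-theorem via Stone duality. First, by passing to the Stone space $K=\ult(\fA)$ and to the countably additive extension $\wh\mu$ of $\mu$, as explained in the paragraph preceding the statement, I may assume that $\mu$ is a Radon probability measure on the compact space $K$ and that $\wh\cE=\{\wh a:a\in\cE\}$ is an infinite family of clopen subsets with $\mu(\wh a\btu\wh b)\ge\eps$ for all distinct $a,b\in\cE$. Since the Stone isomorphism $a\mapsto\wh a$ preserves Boolean independence, it is enough to exhibit an infinite Boolean-independent subfamily of $\wh\cE$.

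I would then apply Rosenthal's $\ell_1$-theorem to the bounded sequence of indicators $\{\chi_{\wh a}:a\in\cE\}$ in $C(K)$: after passing to a subsequence $(a_n)\sub\cE$, $(\chi_{\wh{a_n}})$ is either weakly Cauchy in $C(K)$ or equivalent to the unit vector basis of $\ell_1$. The weakly Cauchy case is easy to dispose of under the hypothesis. Testing against the Dirac functionals $\delta_x\in C(K)^*$ shows that $\chi_{\wh{a_n}}(x)$ is Cauchy for every $x\in K$, and since the values are $\{0,1\}$-valued the sequence converges pointwise to $\chi_B$ for some $B\sub K$. Bounded convergence then yields $\mu(\wh{a_n}\btu\wh{a_m})=\|\chi_{\wh{a_n}}-\chi_{\wh{a_m}}\|_{L^1(\mu)}\to 0$, contradicting the $\eps$-separation. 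Hence a subsequence of $(\chi_{\wh{a_n}})$ is $\ell_1$-equivalent in $C(K)$.

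The principal obstacle is now to extract an honest Boolean-independent subsequence from an $\ell_1$-equivalent subsequence of indicator functions: $\ell_1$-equivalence only says that the indicators behave like an $\ell_1$-basis as \emph{functions}, not that every finite Boolean combination of the underlying sets is nonempty. This last step can be arranged either by iterating Rosenthal's dichotomy on Boolean combinations within the algebra generated by the $a_n$'s, or by invoking the sharper combinatorial form of the theorem which directly yields a subsequence that is either pointwise Cauchy on $K$ or Boolean-independent---in the latter formulation the first alternative is ruled out verbatim by the $L^1$-argument above, and the conclusion is immediate. Concretely, this refinement is done by a Ramsey-style inductive construction: at stage $k$, with a finite independent family $\{a_{n_1},\ldots,a_{n_k}\}$ and Stone-space atoms $\{\wh{B_s}:s\in\{0,1\}^k\}$ already built, one selects the next element of $\cE$ so as to split every alive cell $\wh{B_s}$ nontrivially, using the failure of pointwise Cauchyness (a consequence of $\eps$-separation) to guarantee such choices remain cofinally available. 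The combinatorial bookkeeping of this refinement is where the real work of the lemma lies.
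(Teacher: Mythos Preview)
Your proposal is correct and lands on essentially the same argument the paper has in mind, but you take an unnecessary detour. The paper does not write out a proof of this lemma; it merely remarks that the result follows easily from the combinatorial form of Rosenthal's dichotomy, stated immediately afterwards as Lemma~\ref{pr:2}: every sequence of sets either has a subsequence whose indicators converge pointwise, or a Boolean-independent subsequence. Once you pass to the Stone space and the Radon extension $\wh\mu$ (exactly as you do), one simply applies that combinatorial dichotomy to a sequence from $\wh\cE$ and rules out the pointwise-convergent alternative by dominated convergence and $\eps$-separation, precisely the $L^1$ computation you give.

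Your route through the Banach-space $\ell_1$-theorem in $C(K)$ is valid but buys nothing: as you yourself observe, an $\ell_1$-equivalent sequence of indicators need not be Boolean-independent, so you are forced back to the combinatorial form anyway. Your final paragraph, the ``Ramsey-style inductive construction'', is really a sketch of a proof of Lemma~\ref{pr:2} itself rather than a separate step in the argument; the claim that failure of pointwise Cauchyness lets you ``split every alive cell'' at each stage is where the genuine content of Rosenthal's lemma lies, and it requires more care than you indicate (one point failing to be Cauchy does not immediately let you split all $2^k$ cells simultaneously). It is cleaner, as the paper does, to quote the combinatorial dichotomy as a black box and apply it in one line.
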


Note that, in turn, if $(a_n)_n$ is an independent sequence in some Boolean algebra $\fA$  then there is a measure $\mu$ on $\fA$
such that $\mu(a_n)=1/2$ for every $n$ and $a_n$ are (stochastically) $\mu$-independent. In particular,
if $n\neq k$ then $\mu(a_n\btu a_k)=1/2$ so $a_n$ are separated by $\mu$.

Lemma \ref{pr:1} was proved in \cite{FP04} by a direct argument; it can be also easily derived from the classical lemma due to Rosenthal, which we recall here  in the following form.

\begin{LEMM}[\cite{Ro74}]\label{pr:2}
Let $(A_k)_k$ be a sequence of subsets of some set $T$.  Then

\begin{enumerate}[(i)]
\item either there is an infinite $N\sub\w$ such that the sequence of functions $(\chi_{A_k})_{k\in N}$ converges pointwise,
\item or else  there is an infinite $N\sub\w$ such that the sequence  $({A_k})_{k\in N}$ is independent.
\end{enumerate}
\end{LEMM}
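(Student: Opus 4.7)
The plan is to prove the contrapositive: assume (i) fails and construct an infinite $N\sub\w$ satisfying (ii). The key reformulation associates to each $t\in T$ the set $S_t:=\{k\in\w:t\in A_k\}$. Pointwise convergence of $(\chi_{A_k})_{k\in N}$ on $T$ is equivalent to the statement that, for every $t\in T$, the set $N$ is almost contained in $S_t$ or almost disjoint from it; equivalently, no $S_t$ splits $N$ into two infinite pieces. The failure of (i) thus means that for every infinite $N\sub\w$ there is $t\in T$ with both $S_t\cap N$ and $N\sm S_t$ infinite.

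With this in hand, I would recursively build $N=\{k_1<k_2<\cdots\}$ together with a family of witnesses $\{t_\sigma:\sigma\in 2^{<\w}\}\sub T$, maintaining the invariant that for every $n$, every $\sigma\in 2^n$, and every $i\le n$, one has $t_\sigma\in A_{k_i}\iff\sigma(i)=1$. Once this is achieved, independence of $(A_{k_n})_n$ follows immediately: any pair of finite disjoint $N',N''\sub\{k_1,\ldots,k_n\}$ can be encoded by some $\sigma\in 2^n$, and the corresponding $t_\sigma$ realises the required nonempty intersection, which is precisely condition (ii).

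The delicate step is the induction $n\to n+1$. In parallel with the $k_n$ and $t_\sigma$, I would maintain a descending chain of infinite workspaces $M_0\supseteq M_1\supseteq\cdots\sub\w$ with $k_{n+1}=\min M_n$, and the goal is to pick $k_{n+1}$ so that $A_{k_{n+1}}$ simultaneously splits all $2^n$ cells $B_\sigma:=\bigcap_{i:\sigma(i)=1}A_{k_i}\cap\bigcap_{i:\sigma(i)=0}(T\sm A_{k_i})$. Since the failure of (i) produces only \emph{one} splitter per infinite set at a time, I would iterate the splitting hypothesis $2^n$ times in a tree-like fashion: at each substage, refine the current workspace to a subset on which the next $B_\sigma$ can still be split by some future index. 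I expect this bookkeeping to be the main technical obstacle; it is essentially the tree construction that underlies the classical proofs of Rosenthal's $\ell_1$-dichotomy. Setting $N:=\{k_n:n\ge 1\}$ at the end of the construction yields the desired independent subsequence.
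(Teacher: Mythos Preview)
The paper does not prove Lemma~\ref{pr:2}; it is quoted from \cite{Ro74} as a classical fact, so there is no argument in the paper to compare yours against.

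Your reformulation via $S_t=\{k:t\in A_k\}$ and the identification of the failure of (i) with ``every infinite $N$ is split by some $S_t$'' are correct, and the recursive plan is the standard one. The gap is in the inductive step. The splitting hypothesis hands you, for each infinite workspace $M\sub\w$, a \emph{point} $t\in T$ whose trace $S_t$ splits $M$; what you need at stage $n\to n+1$ is an \emph{index} $k_{n+1}\in M_n$ such that the set $A_{k_{n+1}}$ splits every cell $B_\sigma\sub T$. These are dual statements, and your phrase ``iterate the splitting hypothesis $2^n$ times'' does not bridge them: the iteration produces $2^n$ points $t$, each lying in \emph{some} cell, but nothing forces them to fall into the $2^n$ \emph{different} cells, nor does a collection of points single out a common index $k_{n+1}$. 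Concretely, with only the invariant you record --- one descending chain $M_n$ and one witness $t_\sigma$ per cell --- a cell $B_\sigma$ may well be a singleton $\{t_\sigma\}$, and then no $A_k$ can split it at all.

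The classical proofs you allude to carry a stronger invariant (for instance, for every $\sigma\in 2^n$ an infinite $M_\sigma\sub\omega$ together with a witness $t_\sigma$ satisfying $M_\sigma\sub S_{t_\sigma}$, with a diagonal choice of the $k_n$'s across the tree), and it is precisely this bookkeeping that makes the recursion close up. Your sketch identifies the right shape of the argument but does not yet contain an invariant strong enough to push the induction through.
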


\section{Weakly Radon-Nikod\'{y}m algebras}\label{wrn}

The following definition singles out the class of Boolean algebras that plays a central role in our paper.

\begin{DEFI}\label{wrn:1}
A Boolean algebra $\fA$ is {\em weakly Radon-Nikod\'{y}m} (WRN) if
there is a family $\cG\sub \fA$ generating $\fA$ such that $\cG$ can be written as $\cG=\bigcup_n \cG_n$, where, for every $n$,
 $\cG_n$ contains no infinite independent sequence.
\end{DEFI}

The class of WRN algebras is directly related to the class of weakly Radon-Nikod\'{y}m compacta.
 Glasner and Megrelishvili \cite{GM12} define a compact space to be weakly Radon-Nikodým (WRN) if it is homeomorphic to a weak* compact subset of the dual of a Banach space not containing an isomorphic copy of $\ell_1$.
Using the Davis-Figiel-Johnson-Pe{\l}czy\'{n}ski factorization technique one can easily prove that a compact space $K$ is WRN if and only if the Banach space $\CC(K)$ is weakly precompactly generated.
The concept of weakly precompactly generated Banach spaces was introduced by Haydon  \cite{H81}. A Banach space $X$ is said to be weakly precompactly generated if $X=\SPAN(W)$ for some weakly precompact set $W \sub X$, i.e. for some set $W$ such that every sequence in $W$ has a weakly Cauchy subsequence.
We refer the reader to \cite{MC} for further discussion on WRN compacta and related classes.

The following characterization of WRN Boolean algebras is a consequence of \cite[Lemma 2.7 and Theorem 2.8]{MC}.

\begin{PROP}\label{wrn:2}
The following conditions are equivalent for a Boolean algebra $\fA$:
	\begin{enumerate}[(i)]
		\item  $\fA$ is WRN;
		\item There is a decomposition $\fA=\bigcup_{n\in\omega}\cE_n$ such that, for every $n$,   $\cE_n$
		contains no infinite independent sequence;
		\item the Stone space $\ult(\fA)$ of $\fA$ is weakly Radon-Nikod\'{y}m  compact.
	\end{enumerate}
\end{PROP}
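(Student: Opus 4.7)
The implication (ii) $\Rightarrow$ (i) is immediate: if $\fA$ itself decomposes as $\fA = \bigcup_n \cE_n$ with each piece free of infinite independent sequences, then $\cG = \fA$ is the required generating family. The equivalence (i) $\Leftrightarrow$ (iii) is essentially a translation across Stone duality. Via $a \mapsto \widehat{a}$, a generating decomposition $\cG = \bigcup_n \cG_n$ of $\fA$ corresponds to a point-separating family of clopen subsets of $\ult(\fA)$ whose pieces, after passing to characteristic functions, are weakly precompact subsets of $C(\ult(\fA))$; this uses Rosenthal's dichotomy (Lemma \ref{pr:2}) to translate ``no infinite independent sequence'' into weak precompactness, together with the standard fact that an independent sequence of clopens yields an $\ell_1$-sequence of characteristic functions. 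The equivalence of this condition with $\ult(\fA)$ being WRN compact---i.e.\ $C(\ult(\fA))$ being weakly precompactly generated in the sense of Haydon, via the DFJP factorization theorem---is exactly the content of \cite[Lemma 2.7 and Theorem 2.8]{MC}.

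The substantive direction on the Boolean side is (i) $\Rightarrow$ (ii). The natural tool here is Theorem \ref{pwi:2} below, which guarantees that the image of a family without infinite independent sequences under any Boolean polynomial again has no infinite independent sequences. Given the generating decomposition $\cG = \bigcup_n \cG_n$, I would enumerate all Boolean polynomials $p(x_1, \ldots, x_k)$ in finitely many variables and all finite tuples $\vec n = (n_1, \ldots, n_k) \in \w^k$; there are only countably many such data. For each such pair define
\[ \cE_{p, \vec n} = \{ p(a_1, \ldots, a_k) : a_i \in \cG_{n_i} \text{ for each } i \le k \}. \]
A finite union $\cG_{n_1} \cup \cdots \cup \cG_{n_k}$ still contains no infinite independent sequence (by pigeonhole, any such sequence in the union would have an infinite subsequence lying in a single $\cG_{n_i}$), so Theorem \ref{pwi:2} applied to this union gives that $\cE_{p, \vec n}$ contains no infinite independent sequence. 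Since $\cG$ generates $\fA$, every $a \in \fA$ is expressible as $p(a_1, \ldots, a_k)$ for some polynomial $p$ and elements $a_i \in \cG$, hence $a \in \cE_{p, \vec n}$ for a suitable choice of $\vec n$; the countable collection $\{\cE_{p, \vec n}\}$ therefore covers $\fA$, yielding (ii).

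The one place where real content is required is Theorem \ref{pwi:2}, whose proof in Section \ref{pwi} rests on a quantitative version of the Sauer-Shelah lemma; once that theorem is available, the reduction above is purely combinatorial bookkeeping, and the topological equivalence with (iii) is provided by \cite[Lemma 2.7, Theorem 2.8]{MC}.
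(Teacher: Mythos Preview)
Your proposal is correct and follows the paper's own route: the equivalence with (iii) is deferred to \cite[Lemma 2.7 and Theorem 2.8]{MC}, (ii) $\Rightarrow$ (i) is trivial, and (i) $\Rightarrow$ (ii) is obtained from Theorem \ref{pwi:2} by enumerating polynomials and finite index tuples---precisely the ``direct proof'' the paper announces for Section \ref{pwi}. One small inaccuracy: the proof of Theorem \ref{pwi:2} does not rest on the Sauer-Shelah lemma (that underlies Theorem \ref{pwi:1}); it uses instead Lemma \ref{pr:1}, the measure associated to an independent sequence, and a Ramsey coloring argument.
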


The implication $(i)\to (ii)$ in Proposition \ref{wrn:2} does not seem to be obvious; we shall give in section \ref{pwi} its direct proof. Note that condition $(ii)$ provides an equivalent definition of
WRN algebras that is sometimes more convenient; for instance it yields the following.

\begin{CORO}\label{wrn:3}
If $\fA$ is a WRN algebra then every subalgebra $\fB\sub\fA$ is WRN too.
\end{CORO}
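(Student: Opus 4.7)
The plan is to exploit the equivalent characterization $(ii)$ in Proposition \ref{wrn:2}, which makes heredity almost immediate; this is exactly the feature the authors highlight right before stating the corollary.

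First, I would apply $(i)\Rightarrow(ii)$ of Proposition \ref{wrn:2} to the WRN algebra $\fA$ to obtain a decomposition $\fA=\bigcup_{n\in\omega}\cE_n$ such that no $\cE_n$ contains an infinite independent sequence. Then, given an arbitrary subalgebra $\fB\sub\fA$, I would simply set $\cF_n:=\cE_n\cap\fB$ for each $n$, which yields the decomposition $\fB=\bigcup_{n\in\omega}\cF_n$.

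Next I would check that none of the pieces $\cF_n$ contains an infinite independent sequence. This is where one has to be slightly careful: independence is a property defined in terms of the Boolean operations and the zero element, but since $\fB$ is a subalgebra of $\fA$, finite meets and complements computed in $\fB$ agree with those computed in $\fA$, and ${\bf 0}_\fB={\bf 0}_\fA$. Hence any infinite independent sequence in $\cF_n\sub\fB$ would automatically be an infinite independent sequence in $\cE_n\sub\fA$, contradicting the choice of the $\cE_n$.

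Having produced a decomposition of $\fB$ into countably many pieces none of which contains an infinite independent sequence, I would invoke the implication $(ii)\Rightarrow(i)$ of Proposition \ref{wrn:2} to conclude that $\fB$ is WRN. There is no real obstacle here: the entire content of the corollary has been absorbed into the equivalence $(i)\Leftrightarrow(ii)$ of Proposition \ref{wrn:2}, whose nontrivial direction $(i)\Rightarrow(ii)$ the authors promise to prove separately in Section \ref{pwi}.
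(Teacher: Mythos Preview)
Your proposal is correct and matches the paper's approach exactly: the corollary is stated immediately after Proposition~\ref{wrn:2} precisely because condition~$(ii)$ makes heredity trivial via the decomposition $\cF_n=\cE_n\cap\fB$. The paper gives no further argument beyond this remark, so your write-up is if anything more detailed than what the authors provide.
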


Note that  the algebra $\fA$ itself does not contain an infinite independent sequence if and only if the space $\ult(\fA)$ is scattered, i.e.\
$\fA$ is superatomic.
Clearly every countable algebra is WRN. More generally, if $\fA$ is an interval algebra, that is $\fA=\la \cG\ra $, where the family $\cG$ is linearly ordered then $\fA$ is WRN.
It follows  from Corollary \ref{wrn:3} that every subalgebra of an interval algebra is WRN. This is also a consequence
of a result due to Heindorf \cite{He97}, stating that
an algebra  $\fA$ embeds into some interval algebra if and only if $\fA=\la \cG\ra$, where $\cG$ has the property that any two elements of
$\cG$ are either comparable or disjoint.

To give some examples of algebras that are not WRN note another obvious consequence of  Proposition \ref{wrn:2}.

\begin{CORO}\label{wrn:4}
If $\fA$ is a WRN algebra then $\fA$ contains no uncountable independent sequence.
\end{CORO}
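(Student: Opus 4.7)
The plan is to exploit the decomposition of $\fA$ provided by Proposition \ref{wrn:2}(ii) together with a pigeonhole argument. Suppose for contradiction that $\{a_\alpha : \alpha<\omega_1\}$ is an uncountable independent sequence in $\fA$, and that $\fA$ is WRN. By Proposition \ref{wrn:2}(ii), we can write $\fA=\bigcup_{n\in\omega}\cE_n$ where each $\cE_n$ contains no infinite independent sequence.

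For each ordinal $\alpha<\omega_1$, pick $n(\alpha)\in\omega$ with $a_\alpha\in\cE_{n(\alpha)}$. Since $\omega_1$ is uncountable and cofinality reasons force one fiber of the map $\alpha\mapsto n(\alpha)$ to be uncountable, there exists $n\in\omega$ such that the set $\{\alpha<\omega_1: a_\alpha\in\cE_n\}$ is uncountable, and in particular infinite. Any subfamily of an independent family is again independent, so $\cE_n$ contains an infinite independent sequence, contradicting the choice of the decomposition.

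No real obstacle is expected here: the entire content is the pigeonhole step, which is immediate once one uses the strengthened form of the definition given by condition (ii) of Proposition \ref{wrn:2} (and this is precisely the reason the author flags, right after Proposition \ref{wrn:2}, that condition (ii) is often the more convenient formulation). Starting from the original Definition \ref{wrn:1}, where the decomposition is only known for a generating family, the argument would not close directly, since an element of an uncountable independent sequence need not belong to the generating family itself.
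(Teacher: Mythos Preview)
Your argument is correct and is precisely the ``obvious consequence of Proposition \ref{wrn:2}'' that the paper has in mind; the paper does not spell out a proof, but the intended reasoning is exactly this pigeonhole step applied to the decomposition in condition (ii). Your closing remark about why Definition \ref{wrn:1} alone would not suffice is also apt and matches the paper's comment that (ii) is the more convenient formulation.
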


Hence $P(\omega)$, $P(\omega)/{\it fin}$, $\Clop(2^{\omega_1})$ are not WRN; moreover,
no infinite complete algebra can be weakly Radon-Nikod\'{y}m.

\section{Minimally generated algebras}\label{mg}

The notion of minimal extensions of  algebras was introduced by S.\ Koppelberg, see  \cite{Ko89}; the basic facts we mention below can be found in \cite{Ko89} or \cite{BN07}.

If $\fB$ is a subalgebra of a Boolean algebra $\fA$ and $x\in \fA\sm\fB$ then $\fB(x)$ denotes the subalgebra of $\fA$ generated by $\fB\cup\{x\}$, that is
\[\fB(x)=\{(b\cap x)\cup (b'\cap x^c): b,b'\in\fB\}.\]
$\fB(x)$ is said to be
a {\em minimal extension} of $\fB$ if for any algebra $\fC$, if $\fB\sub\fC\sub \fB(x)$ then either $\fC=\fB$ or $\fC=\fB(x)$.
We recall the following basic fact on minimal extensions, see Proposition 3.1 in \cite{Ko89}.

\begin{PROP} \label{mg:1}
Let $\fA$ be a Boolean algebra, $\fB \leq \fA$ a subalgebra and $x \in \fA$. Then  $ \fB(x)$ is a minimal extension of $\fB$ if and only if for every $b \in \fB$, $x \cap b$ or $x \cap b^c$ is in $\fB$.
\end{PROP}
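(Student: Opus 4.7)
My plan is to prove each direction separately, using the explicit normal form
\[ \fB(x)=\{(b\cap x)\cup (b'\cap x^c): b,b'\in\fB\} \]
displayed just above the statement.

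For the forward direction, fix $b\in\fB$ and insert the intermediate algebra $\fB(x\cap b)$ between $\fB$ and $\fB(x)$. Minimality forces either $\fB(x\cap b)=\fB$, giving $x\cap b\in\fB$ immediately, or $\fB(x\cap b)=\fB(x)$, in which case $x=(b_1\cap(x\cap b))\cup(b_2\cap(x\cap b)^c)$ for some $b_1,b_2\in\fB$. In that second case, expanding $(x\cap b)^c=x^c\cup b^c$ and intersecting with $b^c$ kills the first summand and collapses the rest to $b_2\cap b^c$, which yields $x\cap b^c\in\fB$.

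For the converse, assume the stated condition and pick any algebra $\fC$ with $\fB\subsetneq\fC\sub\fB(x)$; the goal is to show $x\in\fC$. Choose $c\in\fC\sm\fB$, and after absorbing the part $b_1\cap b_2$ (which automatically sits inside $c$ and lies in $\fB$), write $c=(b_1\cap x)\cup(b_2\cap x^c)$ with $b_1\cap b_2=0$. Apply the hypothesis to $b_1$. If $x\cap b_1\in\fB$, then $c\cap b_1=x\cap b_1\in\fB$, so $c\cap b_1^c=b_2\cap x^c\in\fC$; applying the hypothesis to $b_2$, the subcase $x\cap b_2\in\fB$ would force $c\in\fB$, so $x\cap b_2^c\in\fB$, and then $x\cap b_2=b_2\sm(b_2\cap x^c)\in\fC$, giving $x\in\fC$. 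If instead $x\cap b_1^c\in\fB$, then $x\cap b_2=(x\cap b_1^c)\cap b_2\in\fB$ because $b_2\sub b_1^c$, so $c\cap b_2\in\fB$ and $c\cap b_2^c=b_1\cap x\in\fC$, giving $x=(x\cap b_1)\cup(x\cap b_1^c)\in\fC$.

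The main obstacle is bookkeeping: the normal-form representation of elements of $\fB(x)$ is not unique, so one must first normalize (by arranging $b_1\cap b_2=0$) before the hypothesis can be applied productively, and then carefully track which pieces of $c$ end up in $\fB$ and which merely in $\fC$. Once the normalization is in place the case analysis branches twice, but the branches are short and essentially parallel, so no serious difficulty remains.
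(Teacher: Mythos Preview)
Your proof is correct and follows essentially the same approach as the paper: the forward direction is identical, and for the converse both you and the paper normalize the representation of an element of $\fB(x)\setminus\fB$ and then apply the hypothesis to the coefficients $b_1,b_2$. The only cosmetic difference is that the paper keeps the extra disjoint piece $a_3\in\fB$ in the normal form and uses the identity $a_2\cap x^c=a_2\cap(a_1^c\cap x)^c$ together with a symmetry remark to shorten the case split, whereas you subtract off $b_1\cap b_2$ and run a two-level case analysis; both arrive at $x\in\fB(y)$ (equivalently $x\in\fC$) in the same way.
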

\begin{proof}
Suppose that the extension $\fB\leq \fB(x)$ is minimal. Take $b\in \fB$. If $x \cap b$ is not in $\fB$, then $\fB(x\cap b)=\fB(x)$. Therefore, there exist $a_1,a_2 \in \fB$ such that
\[x= \big( a_1 \cap (x\cap b)\big)  \cup \big(a_2 \cap (x \cap b)^c\big) = (a_1 \cap x \cap b ) \cup ( a_2 \cap (x^c \cup b^c)).\]
 Hence $x \cap b^c = a_2 \cap b^c \in \fB$.

Suppose now that for every $b \in \fB$, either $x \cap b$ or $x \cap b^c$ is in $\fB$. Consider any element $y\in \fB(x)\sm\fB$.
Then there are disjoint $a_1,a_2,a_3 \in \fB$ such that
\[y= (a_1 \cap x ) \cup (a_2 \cap x^c) \cup a_3.\]
Since $a_2 \cap x^c = a_2 \cap (a_1 \cup x^c) = a_2 \cap ((a_1^c \cap x)^c)$, it follows that either $a_1 \cap x$ or $a_2 \cap x^c$ is in $\fB$.
By symmetry, we can assume that $a_2\cap x^c\in\fB$. Then $y$ can be written as $y=(a_1\cap x) \cup c$ where $a_1,c \in \fB$ are disjoint.
It follows that $a_1\cap x\notin\fB$ and $a_1\cap x=y\cap c^c\in\fB(y)$. Since $a_1 ^c \cap x$ must be in $\fB$, we get $x\in\fB(y)$ so $\fB(x)=\fB(y)$,
as required.
\end{proof}

\begin{CORO} \label{mg:1.5}
In the setting of Proposition \ref{mg:1}, if
for every finite $\fB_0\leq \fB$ there is a finite subalgebra $\fB_1$ with $\fB_0\leq \fB_1\leq \fB$ such that
$\fB_1\leq \fB_1(x)$ is a minimal extension  then
$\fB(x)$ is a minimal extension of $\fB$.
\end{CORO}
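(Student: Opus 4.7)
The plan is to reduce the statement to a direct application of Proposition \ref{mg:1}, used twice: once as a sufficient condition for minimality of $\fB(x)$ over $\fB$, and once as a necessary condition for minimality of $\fB_1(x)$ over $\fB_1$.

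By Proposition \ref{mg:1}, in order to conclude that $\fB(x)$ is a minimal extension of $\fB$, it suffices to show that for every $b\in\fB$, at least one of $x\cap b$ or $x\cap b^c$ lies in $\fB$. So fix an arbitrary $b\in\fB$ and let $\fB_0=\{0,b,b^c,{\bf 1}\}$, which is a finite subalgebra of $\fB$. By hypothesis, there is a finite subalgebra $\fB_1$ with $\fB_0\leq \fB_1\leq \fB$ such that $\fB_1\leq \fB_1(x)$ is a minimal extension.

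Now I would apply Proposition \ref{mg:1} in the reverse direction, to the pair $\fB_1\leq \fB_1(x)$ and the element $b\in \fB_0\subseteq \fB_1$: the minimality of this extension forces either $x\cap b\in\fB_1$ or $x\cap b^c\in\fB_1$. Since $\fB_1\subseteq\fB$, in either case the required element belongs to $\fB$, and another application of Proposition \ref{mg:1} (this time in the easier direction) finishes the argument.

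There is no real obstacle here; the only point worth flagging is that Proposition \ref{mg:1} must be available in both directions, which it is, so the corollary is essentially a routine unwinding of the characterisation. The choice $\fB_0=\la b\ra$ is what couples the two applications together.
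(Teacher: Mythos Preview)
Your argument is correct and is precisely the intended one: the paper states Corollary~\ref{mg:1.5} without proof, as an immediate consequence of the two directions of Proposition~\ref{mg:1}, and your unwinding via $\fB_0=\la b\ra$ is exactly how that implication goes through.
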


A Boolean algebra $\fA$ is {\em minimally generated over} $\fB\leq \fA$ if $\fA$ can be written, for some ordinal number $\gamma$, as a continuous union
\[\fA=\bigcup_{\xi<\gamma} \fB_\xi,\]
where $\fB_0=\fB$, and $\fB_{\xi+1}$ is a minimal extension of $\fB_\xi$ for every $\xi<\gamma$. Finally, $\fA$ is said to be minimally generated
if it is minimally generated over the trivial algebra $\fB=\{0,1\}$.

Note that if $\fA$ is minimally generated then its Stone space $\ult(\fA)$ can be seen as a limit of an inverse system $(K_\xi, \pi^\eta_\xi)_{\xi<\eta<\kappa}$, where
$K_0=\{0,1\}$, every $K_\xi$ is compact,  and for every $\xi<\kappa$, the bonding map $\pi^{\xi+1}_\xi: K_{\xi+1}\to K_\xi$ has the property that there is a unique $x_\xi\in K_\xi$ such that
$|(\pi^{\xi+1}_\xi)^{-1}(x_\xi)|=2$, and $|(\pi^{\xi+1}_\xi)^{-1}(x)|=1$ for $x\neq x_\xi$.

Koppelberg \cite{Ko89} proved that a minimally generated algebra cannot contain an uncountable independent sequence.
This was generalized by Borodulin-Nadzieja  \cite{BN07} as follows.

\begin{THEO} \label{mg:2}
Let $\fA$ be a minimally generated algebra. Then every measure $\mu$ on $\fA$ has countable  type.
In particular, $\fA$ contains no uncountable independent sequence and, consequently, $\ult(\fA)$ cannot be mapped onto
$2^{\omega_1}$.
\end{THEO}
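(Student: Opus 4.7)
I would deduce the two ``in particular'' statements from the main assertion and prove the main claim by transfinite induction along the minimal generation tower $\fA=\bigcup_{\xi<\gamma}\fB_\xi$. The Stone-space conclusion is immediate by Stone duality: a continuous surjection $\ult(\fA)\twoheadrightarrow 2^{\omega_1}$ dualizes to an embedding $\Clop(2^{\omega_1})\hookrightarrow\fA$, and the canonical coordinate generators of $\Clop(2^{\omega_1})$ then form an uncountable independent sequence in $\fA$. The ``no uncountable independent sequence'' clause reduces to the main claim as follows: given an uncountable independent sequence $(a_\alpha)_{\alpha<\omega_1}$ in $\fA$, one defines a measure $\mu_0$ on $\la\{a_\alpha:\alpha<\omega_1\}\ra$ making the $a_\alpha$ stochastically $\mu_0$-independent with $\mu_0(a_\alpha)=1/2$ and extends $\mu_0$ to a measure $\mu$ on $\fA$ (any positive additive functional on a subalgebra of a Boolean algebra extends); since $\mu(a_\alpha\btu a_\beta)=1/2$ for $\alpha\neq\beta$, the family $\{a_\alpha\}$ is uncountably $1/2$-separated by $\mu$, contradicting the countable type of~$\mu$.

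For the main claim, I would prove by transfinite induction on $\xi$ that $\mu\uhr\fB_\xi$ has countable type. The base case $\fB_0=\{0,1\}$ is trivial. For the successor step $\fB_{\xi+1}=\fB_\xi(x_\xi)$, let $\fC\leq\fB_\xi$ be a countable witness of countable type on $\fB_\xi$; then $\fC(x_\xi)\leq\fB_{\xi+1}$ is still countable, and for $y=(b\cap x_\xi)\cup(b'\cap x_\xi^c)\in\fB_{\xi+1}$ one computes
\[ y\btu\bigl((c\cap x_\xi)\cup(c'\cap x_\xi^c)\bigr)=\bigl((b\btu c)\cap x_\xi\bigr)\cup\bigl((b'\btu c')\cap x_\xi^c\bigr), \]
whose $\mu$-measure is bounded by $\mu(b\btu c)+\mu(b'\btu c')$ and can be made arbitrarily small by choosing $c,c'\in\fC$ approximating $b,b'$. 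A limit $\xi$ of countable cofinality is handled by taking the union of the countable witnesses along a cofinal $\omega$-sequence in $\xi$.

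The main obstacle is a limit $\xi$ of uncountable cofinality. Arguing by contradiction, a failure of countable type on $\fB_\xi$ yields $\eps>0$ and an uncountable family $(a_\alpha)_{\alpha<\omega_1}\sub\fB_\xi$ that is $\eps$-separated by $\mu$, with each $a_\alpha\in\fB_{\eta_\alpha}$ for some $\eta_\alpha<\xi$. When $\xi$ has cofinality greater than $\omega_1$, a pigeonhole argument confines uncountably many of the $a_\alpha$ to a single $\fB_\eta$ with $\eta<\xi$, contradicting the inductive hypothesis. The decisive subcase is cofinality exactly $\omega_1$: here one may WLOG take $(\eta_\alpha)$ strictly increasing and cofinal in $\xi$ and exploit the prime-ideal structure from Proposition~\ref{mg:1} (each bonding generator $x_\eta$ satisfies $x_\eta\cap b\in\fB_\eta$ or $x_\eta^c\cap b\in\fB_\eta$ for every $b\in\fB_\eta$), together with a suitable refinement of $(a_\alpha)$ and Lemma~\ref{pr:1}, in order to relocate an $\omega_1$-indexed $\eps$-separated subfamily into a bounded stage $\fB_\eta$ and invoke the inductive hypothesis. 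This cofinality-$\omega_1$ subcase is where I expect the bulk of the technical effort.
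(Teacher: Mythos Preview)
The paper does not supply its own proof of Theorem~\ref{mg:2}; the result is quoted from Borodulin--Nadzieja~\cite{BN07} as background. So there is no in-paper argument to compare against, and your proposal must stand on its own.

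Your reductions of the two ``in particular'' clauses to the main claim are correct, and the successor step and the countable-cofinality limit step of your transfinite induction are fine. The genuine gap is exactly where you locate it: the cofinality-$\omega_1$ limit. What you offer there is not a proof but a hope, and the mechanism you sketch---use Lemma~\ref{pr:1} together with the minimal-extension structure to push an uncountable $\eps$-separated family down into some bounded stage $\fB_\eta$---does not obviously succeed. Lemma~\ref{pr:1} extracts only a \emph{countably} infinite independent subsequence from an $\eps$-separated family, which gives no leverage for relocating $\omega_1$ many elements; and the one-point-splitting picture of a minimal extension $\fB_\eta\le\fB_{\eta+1}$ controls the bonding map $\ult(\fB_{\eta+1})\to\ult(\fB_\eta)$, not the fibre of $\ult(\fA)\to\ult(\fB_\eta)$, which is what carries the $\mu$-mass you would need to control.

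More fundamentally, the bare inductive hypothesis ``$\mu\uhr\fB_\eta$ has countable type for every $\eta<\xi$'' is too weak by itself to pass to a cofinality-$\omega_1$ limit. The product measure on $\Clop(2^{\omega_1})=\bigcup_{\alpha<\omega_1}\Clop(2^\alpha)$ already shows this: each $\Clop(2^\alpha)$ with $\alpha<\omega_1$ is countable, so the restricted measure trivially has countable type, yet the full measure does not. Of course $\Clop(2^{\omega_1})$ is not minimally generated, so this is no counterexample to the theorem---but it does show that minimality must enter the limit argument in an essential, not merely incidental, way. Your sketch does not indicate how. Until that step is filled in, the proposal is an outline with the main difficulty still open.
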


Let us remark that Theorem \ref{mg:2} and Corollary \ref{wrn:4} name a common feature  of  the classes of minimally generated algebras and WRN algebras.
Another shared property is that both classes contain all superatomic algebras (that is algebras having scattered Stone spaces).
On the other hand,
we show below that there is a WRN Boolean algebra which is not minimally generated and,  in the next section, we give an example of a minimally generated algebra
which is not weakly Radon-Nikod\'{y}m.

\begin{EXAM}\label{mg:3}
	There exists a WRN Boolean algebra  which is not minimally generated.
\end{EXAM}

\begin{proof}
	Let $\fB$ be the algebra of subsets of $[0,1)$ generated by the chain $\{[0,t): 0<t<1\}$. Then $\fB$ is an interval algebra and therefore is minimally generated, see
	Example 2.1 in \cite{Ko89}
	(note that its Stone space is the familiar split interval).
	
	Koppelberg \cite[Example 1]{Ko88} proved, in particular, that the free product $\fA= \fB \otimes \fB$ is not minimally generated.
	Such a  free product is generated by
	\[ \mathcal{G}= \Big \lbrace [0,a) \times [0,1): 0<a< 1 \Big \rbrace \cup  \Big \lbrace [0,1) \times [0,a):  0<a< 1 \Big \rbrace .\]
	Note that no three  elements of  $\mathcal{G}$ are independent so  $\fA$ is a WRN algebra.
\end{proof}

\section{Example of a minimally generated algebra which is not WRN}\label{ex}

We give in this section a construction of a Boolean algebra with the properties announced by the title.
We shall work in the Cantor set $2^\w$; let $\fA_0=\Clop(2^\w)$. For every partial function $\sigma$ on $\w$ into $2$ we write
\[ \left[ \sigma \right]= \lbrace x \in 2^w : x(i) = \sigma(i) \mbox{ for every } i\in \dom\sigma \rbrace.\]

Let  $T=\lbrace 3n : n \in \w \rbrace$ and let $S(T)$ be the space of all permutations of $T$.

Let $x \in 2^\w$, $\varphi \in S(T)$ be given. We shall define a certain set $A(x,\varphi) \sub 2^\w$.
First define partial functions $\sigma_n (x, \varphi)$ on $\w$ as follows.

\begin{enumerate}[(i)]
\item $\sigma_n (x, \varphi) (i)= x(i) \mbox{ if } i\in 3n \setminus T$;
\item $\sigma_n (x, \varphi) (\varphi(i))= x(\varphi(i)) \mbox{ if } i \in T \mbox{ and } i < 3n $;
\item $\sigma_n (x, \varphi) (\varphi(3n))= (x(\varphi(3n)) + 1) \mod 2 $.
\end{enumerate}

Note that every   $\sigma_n(x, \varphi)$ is defined on the set
\[(3n \setminus T )\cup \varphi(\lbrace i \in T: i \leq 3n \rbrace),\]
so the domain of $\sigma_n(x, \varphi)$ is of size $3n+1$.
We now set
\[A(x, \varphi) = \bigcup_n \left[ \sigma_n (x, \varphi) \right ] .\]

We shall say below that a sequence $(A_n)_n$ of subsets of $2^\w$ converges to a point $x\in 2^\w$ if every neighbourhood of $x$ contains $A_n$ for almost all $n$.

\begin{CLAI}\label{ex:1}
For any $x$ and $\varphi$, $\Suc{\left[ \sigma_n (x, \varphi) \right]}{n}$ is a sequence of disjoint clopen subsets of  $2^\w$ converging to $x$.
\end{CLAI}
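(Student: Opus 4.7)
The plan is to verify the three assertions separately: clopen, pairwise disjoint, and convergence to $x$.

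First, each $[\sigma_n(x,\varphi)]$ is clopen because $\sigma_n(x,\varphi)$ is a partial function with finite domain (of size $3n+1$, as computed in the text), so $[\sigma_n(x,\varphi)]$ is a basic clopen cylinder in $2^\w$. This step is immediate.

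For disjointness, I would fix $n<m$ and exhibit a coordinate on which $\sigma_n(x,\varphi)$ and $\sigma_m(x,\varphi)$ disagree. The natural candidate is $\varphi(3n)$: by clause (iii) of the definition, $\sigma_n(x,\varphi)(\varphi(3n)) = x(\varphi(3n))+1 \bmod 2$; on the other hand, since $3n \in T$ and $3n < 3m$, clause (ii) applied to $\sigma_m$ (with $i = 3n$) gives $\sigma_m(x,\varphi)(\varphi(3n)) = x(\varphi(3n))$. These values differ, so the cylinders are disjoint.

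For convergence to $x$, I would start with an arbitrary finite $F\sub\w$ determining the basic neighborhood $U=\{y\in 2^\w: y\uhr F = x\uhr F\}$ of $x$, and show $[\sigma_n(x,\varphi)]\sub U$ for all sufficiently large $n$. Split $F = (F\sm T)\cup (F\cap T)$. For $n$ large enough, $3n>\max(F\sm T)$, so every $i\in F\sm T$ falls under clause (i) and gets the value $x(i)$. Every $i\in F\cap T$ equals $\varphi(j)$ for a unique $j\in T$ (since $\varphi$ is a permutation of $T$); taking $n$ large enough that $j<3n$ for each of these finitely many $j$, clause (ii) assigns $\sigma_n(x,\varphi)(i) = x(i)$. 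The only issue is that clause (iii) flips the value at $\varphi(3n)$, which could a priori lie in $F$; but since $\varphi$ is a bijection $T\to T$, there are only finitely many $n$ with $\varphi(3n)\in F$, so this is avoided for large $n$.

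The only step that requires any care is the convergence part, specifically ruling out the flipped coordinate $\varphi(3n)$ from $F$ — but this is handled cleanly by the finiteness of $F$ and injectivity of $\varphi$. No real obstacle arises.
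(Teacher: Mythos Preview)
Your proof is correct and matches the paper's argument essentially verbatim: the paper also uses the coordinate $\varphi(3n)$ to witness disjointness, and for convergence chooses $n_0$ so that the finite domain $I$ of the target neighborhood satisfies $I\sub (3n_0\setminus T)\cup\varphi(\{i\in T:i<3n_0\})$, which is exactly your conditions on $F\setminus T$ and $F\cap T$ combined. Your separate treatment of the flipped coordinate $\varphi(3n)$ is in fact already implied by your second condition (if every $j=\varphi^{-1}(i)$ with $i\in F\cap T$ satisfies $j<3n$, then $\varphi(3n)\notin F$), so the paper omits it, but this is a harmless redundancy.
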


\begin{proof}
If $n<k$ then $\sigma_n(x,\vf)(\vf(3n))\neq \sigma_k(x,\vf)(\vf(3n))$ so the clopen sets in question are disjoint. If $\tau$ is any partial function with a finite domain $I$
and $x\in\left[\tau\right]$ then take
$n_0$ such that
\[I\sub \left(3n_0\setminus T\right) \cup \vf(\{i\in T: i<3n_0\}).\]
Then $\sigma_n(x,\vf)$ extends $\tau$ so  $\left[ \sigma_n (x, \varphi) \right]\sub\left[\tau\right]$
for every $n\ge n_0$.
\end{proof}

Let us now fix a Borel bijective map $g: 2^\w \rto S(T)$ (recall that between any two uncountable Polish spaces there is always a Borel isomorphism; the fact
that $g$ is Borel will be needed for the proof of Theorem \ref{ex:4}). For every $x\in 2^\w$ take
$A_x = A(x , g(x))$. We define the desired algebra $\fA$ of subsets of $2^\w$ as the one generated by
 $\fA_0$ together with the family $ \lbrace A_x : x \in 2^\w \rbrace$.

\begin{CLAI}\label{ex:2}
The algebra $\fA$ is minimally generated.
\end{CLAI}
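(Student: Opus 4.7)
The plan is to fix an injective enumeration $2^\w=\{x_\alpha:\alpha<\mathfrak{c}\}$ and present $\fA$ as the top of a continuous chain of subalgebras
\[
\fA_0=\fB_0\leq \fB_1\leq\cdots\leq \fB_\alpha\leq\cdots,\qquad \fB_{\alpha+1}=\fB_\alpha(A_{x_\alpha}),
\]
with unions at limit stages. Since $\fA_0=\Clop(2^\w)$ is the countable atomless algebra and hence an interval algebra, it is itself minimally generated over $\{0,1\}$ (cf.\ Example~\ref{mg:3}), so the substantive point is to check that each successor step $\fB_\alpha\leq \fB_{\alpha+1}$ is a minimal extension.

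To prove this I would invoke Corollary~\ref{mg:1.5}. Fix any finite subalgebra $\fB'\leq \fB_\alpha$; without loss of generality $\fB'$ is generated by finitely many clopens in $\fA_0$ together with $A_{x_{\beta_1}},\ldots,A_{x_{\beta_k}}$ with $\beta_i<\alpha$, so the atom $b^*$ of $\fB'$ containing $x_\alpha$ has the form $c\cap\bigcap_i A_{x_{\beta_i}}^{\epsilon_i}$ for some clopen $c$ and $\epsilon_i\in\{0,1\}$. The crucial input from Claim~\ref{ex:1} is that each $A_{x_\beta}$ is an $F_\sigma$ whose closure is $A_{x_\beta}\cup\{x_\beta\}$; combined with the injectivity of the enumeration (which gives $x_\alpha\neq x_{\beta_i}$), each factor $A_{x_{\beta_i}}^{\epsilon_i}$ is a neighborhood of $x_\alpha$, hence so is $b^*$. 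I can therefore pick a basic clopen $U=[\tau]$ with $x_\alpha\in U\subseteq b^*$. Applying Claim~\ref{ex:1} now to $x_\alpha$ itself, only finitely many of the clopens $[\sigma_n(x_\alpha,g(x_\alpha))]$ fail to lie inside $U$, so $A_{x_\alpha}\setminus U\in\fA_0\leq \fB_\alpha$; set $\fB''=\la \fB'\cup\{U,A_{x_\alpha}\setminus U\}\ra\leq \fB_\alpha$.

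The atoms of $\fB''$ admit a clean description: since $U\subseteq b^*$ and $A_{x_\alpha}\setminus U\subseteq U^c$, the atom of $\fB''$ containing $x_\alpha$ is exactly $U$, while every other atom of $\fB''$ is either contained in $A_{x_\alpha}\setminus U\subseteq A_{x_\alpha}$ or disjoint from $A_{x_\alpha}$. Hence $A_{x_\alpha}$ splits only the atom $U$, so for every $b'\in\fB''$ exactly one of $b'$, $(b')^c$ is disjoint from $U$, and for this choice the corresponding intersection with $A_{x_\alpha}$ is a union of atoms of $\fB''$ each lying inside or outside $A_{x_\alpha}$, so belongs to $\fB''$. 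By Proposition~\ref{mg:1} this makes $\fB''(A_{x_\alpha})$ a minimal extension of $\fB''$, and Corollary~\ref{mg:1.5} lifts this to show that $\fB_{\alpha+1}=\fB_\alpha(A_{x_\alpha})$ is a minimal extension of $\fB_\alpha$. The main obstacle is the neighborhood statement for $b^*$: this is where both the precise topological structure of the sets $A_{x_\beta}$ encoded in Claim~\ref{ex:1} and the injectivity of the enumeration of $2^\w$ are genuinely needed.
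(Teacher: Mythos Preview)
Your proof is correct and follows essentially the same strategy as the paper: build $\fA$ as a continuous chain over $\fA_0$ by adjoining the $A_{x_\alpha}$ one at a time, and use the key topological fact (from Claim~\ref{ex:1}) that each $A_x$ is a disjoint union of clopens converging to $x$, so that $\overline{A_x}=A_x\cup\{x\}$ and $x_\alpha$ lies in the interior of every element of $\fB_\alpha$ (or of its complement).

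The only difference is organizational. The paper argues directly via Proposition~\ref{mg:1}: for distinct $x,y$ one of $A_x\cap A_y$, $A_x\setminus A_y$ is clopen, and (implicitly) for any $b\in\fB_\alpha$ one picks a clopen $U$ with $x_\alpha\in U$ and $U$ contained in $b$ or $b^c$, so that $A_{x_\alpha}\cap b^c$ (resp.\ $A_{x_\alpha}\cap b$) equals $(\text{clopen})\cap b^c\in\fB_\alpha$. You instead route through Corollary~\ref{mg:1.5}, passing to a finite $\fB''\ge\fB'$ in which $U$ is literally an atom and all other atoms are resolved by $A_{x_\alpha}\setminus U$. Your version makes the verification of the minimality criterion more explicit, but the underlying idea is identical.
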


\begin{proof}
Note that for any distinct $x,y \in 2^\w$, $\left[ \sigma_n(x,g(x)) \right]$ and $\left[\sigma_n(y,g(y))\right]$ are sequences of clopen sets converging to
$x$ and $y$, respectively. It follows that either $x\notin A_y$ and then $A_x\cap A_y$ is clopen or, $x\in A_y$ and then $A_x\sm A_y$ is clopen.
Therefore, $\fA$ is minimally generated over $\fA_0$ by Proposition \ref{mg:1} and hence $\fA$ is minimally generated
(since $\fA_0$ is minimally generated because it is countable).
\end{proof}

\begin{CLAI}\label{ex:3}
 The algebra $\fA$ is not weakly Radon-Nikod\'{y}m.
\end{CLAI}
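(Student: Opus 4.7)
My plan is to argue by contradiction. Supposing $\fA$ is WRN, Proposition \ref{wrn:2}(ii) gives a decomposition $\fA = \bigcup_{n\in\w}\cE_n$ with no $\cE_n$ containing an infinite independent sequence.

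The first step is to establish that the map $x \mapsto A_x$ is injective. By Claim \ref{ex:1} the clopens $C_n := [\sigma_n(x,g(x))]$ are pairwise disjoint and converge to $x$, and a short argument then shows that $\overline{A_x}\setminus A_x = \{x\}$: any hypothetical $y \neq x$ in $\overline{A_x}\setminus A_x$ admits a neighborhood $V$ with $x \notin \overline{V}$, after which only finitely many $C_n$ meet $V$, so $V \cap A_x$ is a finite union of clopens that cannot accumulate at $y$. Hence $\{A_x : x \in 2^\w\}$ has cardinality $2^{\aleph_0}$, and by pigeonhole some $\cE_{n_0}$ contains $\{A_x : x \in Y\}$ for an uncountable $Y \sub 2^\w$. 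The problem therefore reduces to locating an infinite independent sequence inside this uncountable subfamily of generators, contradicting the choice of $\cE_{n_0}$.

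To produce such a sequence I would apply Rosenthal's lemma (Lemma \ref{pr:2}) to the indicators $\chi_{A_{x_n}}$ of any countable sequence of distinct $x_n \in Y$. The dichotomy yields either an independent subsequence, in which case we are done, or a subsequence along which $\chi_{A_{x_n}}$ converges pointwise to some $f : 2^\w \to \{0,1\}$. The task is thus to show that, by appropriate choice of the $x_n$'s inside $Y$, the pointwise-convergence alternative can be ruled out.

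The heart of the argument exploits the bijectivity of $g : 2^\w \to S(T)$ together with the combinatorics of the $\sigma_n$'s: $y \in A_{x_n}$ holds iff for some $m$ the point $y$ agrees with $x_n$ on $(3m \setminus T) \cup g(x_n)(\{i \in T : i < 3m\})$ and disagrees with $x_n$ at $g(x_n)(3m)$. Since $g$ is a bijection, as $n$ varies the permutations $g(x_n)$ range generically over $S(T)$, and a diagonal construction that flips coordinates $g(x_n)(3m)$ at a well-chosen sequence of indices $(n,m)$ should produce a single $y \in 2^\w$ whose membership in $A_{x_n}$ oscillates infinitely often, ruling out pointwise convergence to any $f$. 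The main obstacle is to execute this diagonal uniformly, so that one fixed $y$ simultaneously witnesses the oscillation along every relevant subsequence; this requires a Ramsey-style refinement of $(x_n)$ inside $Y$ together with careful control over the ranges of the $g(x_n)$. Once this oscillation is secured, Rosenthal's lemma furnishes the independent sequence lying in $\cE_{n_0}$, and the contradiction is complete.
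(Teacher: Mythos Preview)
Your outline has a genuine gap at its core, and it diverges from the paper's argument in a way that cannot be easily repaired.

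First, the ``single witness'' idea is logically impossible. You write that you want one fixed $y\in 2^\w$ whose membership in $A_{x_n}$ oscillates along \emph{every} subsequence, so as to exclude alternative (i) of Rosenthal's lemma. But the values $\chi_{A_{x_n}}(y)$ lie in $\{0,1\}$; any sequence of $0$'s and $1$'s has a constant (hence convergent) subsequence. So no single point can block pointwise convergence of all subsequences. To defeat alternative (i) you would need, for every infinite $N$, a point $y_N$ depending on $N$ --- and that is exactly what Rosenthal's dichotomy already encodes, so you gain nothing.

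Second, and more structurally, pigeonhole is too weak here. From an arbitrary uncountable $Y\sub 2^\w$ with $\{A_x:x\in Y\}\sub\cE_{n_0}$ you have no control whatsoever over the set of permutations $g(Y)\sub S(T)$: it may be meagre, even contained in a single closed nowhere dense set. Your sentence ``since $g$ is a bijection, the permutations $g(x_n)$ range generically over $S(T)$'' is simply false for a generic choice of $x_n\in Y$. The construction of $A_x$ hides all the ``independence'' in the permutation $g(x)$, and without being able to \emph{prescribe} values of $g(x_i)$ you cannot force the combinatorics of the $\sigma_n$'s to separate the $A_{x_i}$.

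The paper's proof addresses exactly this point. Instead of pigeonhole it applies the Baire Category Theorem in the Polish space $S(T)$: since $S(T)=\bigcup_n\{g(x):A_x\in\cA_n\}$, some $\Phi_{n_0}$ is dense in a basic open set $[\psi]$. Density then allows one to choose $x_i$ with $g(x_i)$ agreeing with $\psi$ on an initial segment and satisfying $g(x_i)(3p)=3i$ for a fixed $p$. After a further thinning so that the $x_i$ agree on coordinates below $3p$, a direct computation with the standard product measure $\mu$ on $2^\w$ shows that $\mu(A_{x_i}\btu A_{x_j})\ge\eps$ for some fixed $\eps>0$, and Lemma~\ref{pr:1} yields the independent sequence. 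The Baire step is what replaces your vague ``diagonal construction''; it is the missing idea in your proposal.
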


\begin{proof}
Take any decomposition $\fA=\bigcup_{n < \w}\mathcal{A}_n$;
We shall prove that there is $n<\w$ such that $\mathcal{A}_n$ contains  an independent sequence.

Define $\Phi_n = \lbrace g(x): A_x \in \mathcal{A}_n \rbrace $ for every $n< \w$.
Since $\bigcup_n \Phi_n = g(2^\w)=S(T)$ and $S(T)$ is a completely metrizable space (notice that $S(T)$ is a $G_\delta$-set in $T^T$, which is completely metrizable), the Baire Category Theorem asserts that there exists $n_0 < \w $ and a partial function $\psi$ from $\w$ to $\w$  such that $\Phi_{n_0} \cap \left[ \psi \right]$ is dense in $S(T) \cap \left[ \psi \right]$. We can assume that the domain of $\psi$ is $\lbrace 0,3,...,3(p-1) \rbrace$ for some $p$;
 fix also $i_0\in\w$ such that the range of $\psi$ is included in $\lbrace 0,3,...,3(i_0-1) \rbrace$.

Note that, by density,
 for any $i \geq i_0$ there is $x_i\in2^\w$ such that  $ A_{x_i}\in \mathcal{ A}_{n_0}$, $ g(x_i)(3p)=3i$ and
$ g(x_i)(3j)=\psi(3j)$ for every $j<p$.
Passing to a subsequence of $i$'s, we can additionally assume that $x_i$ have constant values for all $n<3p$. Then the following are satisfied:

\begin{enumerate}[(a)]\label{fixed}
\item for every $n < p$ and for every $i, j$,
\[\left[ \sigma_n (x_i, g(x_i)) \right] = \left[ \sigma_n (x_j, g(x_j)) \right].\]
\item there is a partial function $\sigma$ from $\w$ into $2$ with the domain of size $3p$ such that for every $i\ge i_0$ we have
\[ \left[ \sigma_p (x_i, g(x_i)) \right] = \left[ \sigma \right] \cap C_i ^{\epsilon_i} ,\]
where we write $C_i^{\epsilon_i}= \lbrace x \in 2^\w: x(3i)= \epsilon_i \rbrace$ for the corresponding one-dimensional cylinder in $2^\w$.
\end{enumerate}

Let $\mu$ be the canonical product measure on $2^\w$. We shall prove that $\lbrace A_{x_i} \rbrace_{i \geq i_0 }$ is $\varepsilon$-separated for some $\varepsilon >0$.

Note that for every $x_i$ and every $n < \w$,
\[\mu(\left[ \sigma_n (x_i, g(x_i)) \right])=\frac{1}{2^{3n+1}}.\]
Using (a)--(b) above, for distinct $i,j\ge i_0$  we get

\[\mu(A_{x_i}\setminus A_{x_j}) \geq \mu \Big( \Big((\left[ \sigma  \right]\cap C_i ^{\epsilon_i}) \setminus (\left[ \sigma \right]\cap C_j ^{\epsilon_j}) \Big)
\setminus \Big(\bigcup_{n >p } \left[ \sigma_n (x_j, g(x_j)) \right] \Big) \Big) \geq\]
 %\[\geq \mu \( \Big(\left[ \sigma_p (x_i, g(x_i)) \right] \setminus \left[ \sigma_p (x_j, g(x_j)) \right] \Big) - \mu\Big(\bigcup_{n >p } \left[ \sigma_n (x_j, g(x_j)) \right] \Big) \) \geq \]
\[ \geq \frac{1}{2^{3p+2}} - \sum_{n >p} \frac{1}{2^{3n+1}} =\frac{1}{2^{3p+2}} - \frac{1}{2^{3p+4}}\frac{1}{1-{2^{-3}}} =\frac{5}{7} \frac{1}{2^{3p+2}} .\]

It follows that the sets $A_{x_i}$ for $i\ge i_0$ are $\varepsilon$-separated with $\varepsilon>0$ so by Lemma \ref{pr:1}
 there is an independent subsequence in $\AA_{n_0}$ and we are done.
\end{proof}

The following result summarises our considerations and gives another property of the Boolean algebra we have constructed.
Recall that a compact space $K$ is Rosenthal compact if $K$ can be embedded into $B_1(X)$, the space of Baire-one functions on a Polish space $X$ equipped with the topology of pointwise convergence. We shall use the following result.

\begin{THEO}\cite[Corollary 4.9]{Debs}\label{ex:3.5}
Every separable compact space consisting of Borel functions over a Polish space is Rosenthal compact.
\end{THEO}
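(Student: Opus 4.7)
The plan is to reduce the statement to the definition of Rosenthal compactness by a change of topology argument on the underlying Polish space. Let $K$ be a separable compact space whose points are Borel functions on a Polish space $X$, with the topology of pointwise convergence, and fix a countable dense subset $D=\{f_n:n\in\omega\}\subseteq K$.

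The first step is to refine the Polish topology on $X$. By the classical theorem of descriptive set theory (cf.\ Kuratowski), given countably many Borel functions $f_n\colon X\to\R$, there is a finer Polish topology $\tau'$ on $X$, generating the same Borel $\sigma$-algebra, with respect to which every $f_n$ is continuous. Replacing $X$ by $(X,\tau')$, we may therefore assume $D\subseteq C(X)$ while preserving the Borel measurability of every $f\in K$.

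The second step, which is the main obstacle, is to show that every $f\in K$ lies in $B_1(X)$. Since $K$ is not assumed to be metrizable, it is not a priori clear that $f$ is a \emph{sequential} limit of elements of $D$ -- only that $f$ lies in the pointwise closure of $D$. The point of Debs's argument is to exploit the combination of pointwise-compactness of $K$ with countable density of $D$: working inside the pointwise closure of $D$ in the space of Borel functions on $X$, one uses a derivation/oscillation rank argument (in the spirit of Bourgain--Fremlin--Talagrand) to show that the sequential closure of $D$ already contains all of $K$. This yields $K\subseteq B_1(X)$, since each $f\in K$ becomes a pointwise limit of a sequence of continuous functions.

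Once $K\subseteq B_1(X)$ with $X$ Polish, the definition of Rosenthal compactness is satisfied immediately: the inclusion realizes $K$ as a pointwise compact subset of $B_1(X)$. Thus the whole weight of the proof rests on the sequential-density step, which is precisely where separability is indispensable -- without it one could only place $K$ in a higher Baire class, and the $B_1$ conclusion would fail.
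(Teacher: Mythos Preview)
The paper does not prove this theorem; it is quoted from \cite[Corollary 4.9]{Debs} and applied as a black box in the proof of Theorem~\ref{ex:4}. There is therefore no proof in the paper to compare your proposal against.

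As for the proposal itself: the architecture is right --- refine the Polish topology so that the countable dense set $D\subseteq K$ consists of continuous functions, then argue that $K\subseteq B_1(X)$. But the second step, which you correctly flag as ``the main obstacle,'' is not actually carried out. You write that ``one uses a derivation/oscillation rank argument (in the spirit of Bourgain--Fremlin--Talagrand) to show that the sequential closure of $D$ already contains all of $K$''; this names the relevant circle of ideas but does not constitute a proof. Note in particular that you cannot simply invoke angelicity of $B_1(X)$, since that presupposes $K\subseteq B_1(X)$, which is the conclusion sought.

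What is missing is the explicit use of the Borel hypothesis via the Rosenthal/BFT dichotomy: for a countable pointwise-bounded $D\subseteq C(X)$ over a Polish space $X$, either $\overline{D}^{\,\R^X}\subseteq B_1(X)$, or some sequence in $D$ is independent and then $\overline{D}$ has cardinality $2^{\mathfrak c}$ (and in particular contains functions that are not Borel, there being only $\mathfrak c$ Borel functions on $X$). Since here $\overline{D}=K$ consists of Borel functions by assumption, the second alternative is ruled out and $K\subseteq B_1(X)$ follows. Your remark that separability is indispensable is on target --- one needs a countable $D$ to run this dichotomy --- but the logical link between ``every function in $K$ is Borel'' and ``the bad alternative of the dichotomy fails'' is precisely the content of the argument, and it should be stated rather than alluded to.
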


\begin{THEO}\label{ex:4}
There is a minimally generated algebra $\fA$ such that its Stone space $K= \ult(\fA)$ is a separable Rosenthal compactum which is not weakly Radon-Nikod\'{y}m.
\end{THEO}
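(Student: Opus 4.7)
Since Claims \ref{ex:2} and \ref{ex:3} already give that $\fA$ is minimally generated and not WRN (the latter transferring to $K$ via Proposition \ref{wrn:2}), what remains is to prove that $K = \ult(\fA)$ is a separable Rosenthal compactum. The plan is to invoke Theorem \ref{ex:3.5} by realizing $K$ as a compact family of Borel functions on a Polish space, and then to verify separability directly.

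For the Borel realization I will take the Polish space $X = 2^\omega \sqcup \omega$, enumerate $\fA_0 = \{a_n : n \in \omega\}$, and define $\Phi \colon K \to \{0,1\}^X$ by $\Phi(u)(x) = [A_x \in u]$ for $x \in 2^\omega$ and $\Phi(u)(n) = [a_n \in u]$ for $n \in \omega$. Because $\fA_0 \cup \{A_x : x \in 2^\omega\}$ generates $\fA$, the map $\Phi$ is a continuous injection from the compact $K$ into the Hausdorff space $\{0,1\}^X$, hence an embedding. Each $\Phi(u)$ is Borel on $X$: the $\omega$-coordinates are trivial, and on $2^\omega$ every ultrafilter of $\fA$ is either a principal $p(y)$ for some $y \in 2^\omega$ or a minimal-extension twin $q(y)$ produced at the step where $A_y$ is added, so in either case $[A_x \in u]$ equals $[y \in A_x]$ up to a possible single-point modification at $x = y$. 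The condition $y \in A_x$ unfolds as $\bigvee_n (y \in [\sigma_n(x, g(x))])$, and each coordinate condition is Borel in $x$ thanks to $g$ being Borel --- this is exactly where the hypothesis on $g$, flagged immediately after its definition, is used.

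For separability I would prove the following key claim: every non-empty $a \in \fA$ contains a non-empty clopen of $2^\omega$. Granted this, the countable set $\{p(y) : y \in D\}$, with $D$ any countable dense subset of $2^\omega$ (e.g.\ the eventually-zero sequences), is dense in $K$, because $D$ meets every non-empty clopen of $2^\omega$ and therefore every non-empty basic clopen $\widehat{a}$ of $K$. To verify the claim, write $a$ as a finite union of atoms in some $\fB_k = \langle \fA_0, A_{x_1},\ldots,A_{x_k}\rangle$ and fix a non-empty atom $\alpha = c \cap \bigcap_i A_{x_i}^{\epsilon_i}$. Whenever $\epsilon_i = 1$, the inclusion $\alpha \subseteq A_{x_i} = \bigcup_n [\sigma_n(x_i, g(x_i))]$ lets me refine $\alpha$ by replacing $A_{x_i}$ with a suitable $[\sigma_{n_i}(x_i, g(x_i))]$, absorbed into $c$; after finitely many such steps I may assume $\alpha = c' \cap \bigcap_{i \in I} A_{x_i}^c$. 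Any $y \in \alpha$ with $y \notin \{x_i : i \in I\}$ automatically lies in $c' \cap \bigcap_{i \in I}(A_{x_i}^c \setminus \{x_i\})$, an open subset of $\alpha$, and hence admits a clopen neighbourhood inside $\alpha$.

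The main obstacle is the remaining case in which the only witness $y \in \alpha$ is some $x_j$, $j \in I$; to produce a better witness I will flip one coordinate of $x_j$. A direct calculation from the definition of $\sigma_n$ shows that the one-bit perturbation $y'$ of $x_j$ at coordinate $p$ belongs to $[\sigma_n(x_j, g(x_j))]$ iff $p = g(x_j)(3n)$, hence $y' \in A_{x_j}$ iff $p \in g(x_j)(T) = T$; any $p \not\equiv 0 \pmod 3$ therefore places $y'$ in $A_{x_j}^c$. Since $x_j \notin \overline{A_{x_i}} = A_{x_i} \cup \{x_i\}$ for every $i \in I$ with $i \ne j$, a large enough $p$ simultaneously forces $y'$ into the cylinder $c'$ and into a preassigned clopen neighbourhood of $x_j$ contained in each $A_{x_i}^c$; avoiding at most $k$ exceptional values of $p$ further ensures $y' \notin \{x_1,\ldots,x_k\}$. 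Then $y'$ falls under the previous case and yields a clopen inside $\alpha$, closing the claim and hence the theorem.
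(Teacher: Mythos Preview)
Your proposal is correct and follows essentially the same route as the paper. Both arguments reduce Rosenthal compactness to Theorem~\ref{ex:3.5} via the same embedding idea: realize $K$ inside $\{0,1\}^{2^\omega}\times\{0,1\}^\omega$ by recording, for each ultrafilter $u$, which generators $A_x$ lie in $u$ together with the trace of $u$ on $\fA_0$. Your assertion that $[A_x\in u]$ agrees with $[z_u\in A_x]$ up to a single-point modification at $x=z_u$ is exactly the paper's Claim~A (and the Borel dependence on $x$ is the paper's Claim~B); the paper proves these explicitly, whereas you package them as the ``principal or twin'' dichotomy without spelling out the verification of Claim~A --- you should add that one-line check that $A_x\in u$ and $z_u\notin A_x$ force $x=z_u$ via $\overline{A_x}=A_x\cup\{x\}$.

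The one genuine difference is the separability argument. The paper simply asserts that $\fA_0$ is dense in $\fA$ (``follows easily from Claim~\ref{ex:2}''), while you give a full proof that every non-empty $a\in\fA$ contains a non-empty clopen, including the delicate Step~3 where the only witness in $c'\cap\bigcap_{i\in I}A_{x_i}^c$ is some $x_j$ and you perturb a coordinate $p\notin T$ to manufacture a better witness. Your computation that the one-bit flip $y'$ of $x_j$ lies in $[\sigma_n(x_j,g(x_j))]$ iff $p=g(x_j)(3n)$, hence $y'\in A_{x_j}$ iff $p\in T$, is correct and is precisely the missing ingredient behind the paper's hand-wave. So your argument is longer here, but it actually fills a gap the paper leaves to the reader.
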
	

\begin{proof}
By Claim \ref{ex:3} the algebra $\fA$ is not WRN so $K= \ult(\fA)$ is not weakly Radon-Nikod\'{y}m compact.
It follows easily from Claim \ref{ex:2} that $\fA_0$ is a dense subalgebra of $\fA$. Hence $K$ has a countable $\pi$-base so is, in particular, separable.
We prove below that $K$ is indeed Rosenthal compact.

Given an ultrafilter $u \in K$,  let $z_u$  be the unique point in $2^\w$ such that
\[ \bigcap\{ C\in\Clop(2^\w): C\in u\}=\lbrace z_u\rbrace.\]
\medskip

\noindent {\sc Claim A.} For every $u\in K$ we have
\[\lbrace y \in 2^\w : z_u \in A_y  \rbrace\sub  \lbrace y \in 2^\w : A_y \in u \rbrace \sub  \lbrace y \in 2^\w : z_u \in A_y  \rbrace\cup \lbrace z_u\rbrace.\]

The first inclusion is clear.
To check the latter, note first that $\overline{A_y}=A_y\cup\lbrace y\rbrace$ for every $y\in 2^\w$ since $A_y$ is the union of clopen sets converging to $y$.
Hence  if $A_y\in u$ and $z_u\notin A_y$ then $y=z_u$ (otherwise, $z_y\notin\overline{A_y}$ which contradicts the definition of $z_u$).
\medskip

\noindent {\sc Claim B.} For every $u\in K$, $\lbrace y \in 2^\w : A_y \in u \rbrace$ is a Borel subset of $2^\w$.
\medskip

By Claim A, it is sufficient to check that for any $z\in 2^\w$ the set $\lbrace y \in 2^\w : z \in A_y  \rbrace$ is Borel.
But
\[ \lbrace y \in 2^\w : z \in A_y  \rbrace=\bigcup_n \lbrace y\in 2^\w: z \in \left[ \sigma_n(y, g(y))\right]  \rbrace,\]
and every set $\lbrace y\in 2^\w: z \in \left[ \sigma_n(y, g(y))\right] \rbrace$ is plainly Borel because the function $g$ is Borel.

Consider  now the following mapping $f:K \rto 2^\w\times 2^{2^\w}$
\[  f(u)=\left( z_u, \chi_{ \lbrace y \in 2^\w : A_y \in u \rbrace }\right).\]
Then $f$ is injective since every ultrafilter $u\in K$ is uniquely determined by the family of generators of the algebra $\fA$ that are in $u$.
It is clear that $f$ is continuous. It follows from  Claim B that $K$ is homeomorphic to a pointwise-compact set of Borel functions on a Polish space.
Since $K$ is separable, $K$ is Rosenthal compact by Theorem \ref{ex:3.5}.
\end{proof}

\section{WRN algebras with stronger properties}\label{swrn}

In this section we introduce two subclasses of WRN algebras; they are defined by natural conditions that are slightly stronger
than that of Definition \ref{wrn:1}.

\begin{DEFI}\label{swrn:1}
	A Boolean algebra $\fA$ is in the class $\cI(n)$, where $n\ge 1$, if $\fA$ is generated by a family $\cG\sub\fA$ such that $\cG$ contains no
	$n+1$ independent elements.
\end{DEFI}

\begin{DEFI}\label{swrn:2}
	A Boolean algebra $\fA$ is {\em uniformly weakly Radon-Nikod\'{y}m} (UWRN) if
 $\fA$ is generated by a family $\cG=\bigcup_{n\in\omega}\cG_n$ such that no $\cG_n $ contains an independent sequence of length $n$.
%there is a decomposition $\fA=\bigcup_{n\ge 1}\fA_n$, where $\fA_n$ is a subalgebra in $\cI(n)$ for every $n\ge 1$.
\end{DEFI}

Note that every interval algebra is in $\cI(1)$.  In turn, the following holds.

\begin{THEO}\label{swrn:3}
	Every Boolean algebra from $\mathcal{I}(1)$ is minimally generated.
\end{THEO}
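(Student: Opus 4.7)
The plan is to well-order the generating family $\cG=\{g_\xi:\xi<\kappa\}$ and form the chain $\fB_\xi=\la g_\eta:\eta<\xi\ra$, with $\fB_0=\{0,1\}$ and continuous unions at limits, so that $\fB_\kappa=\fA$. I would then verify that every successor step $\fB_\xi\leq\fB_\xi(g_\xi)=\fB_{\xi+1}$ is a minimal extension (passing over the indices $\xi$ with $g_\xi\in\fB_\xi$, if one insists on proper extensions).

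To check the minimality, I would invoke Corollary \ref{mg:1.5}: given a finite subalgebra $\fB_0\leq\fB_\xi$, it is enough to produce a finite $\fB_1$ with $\fB_0\leq\fB_1\leq\fB_\xi$ for which $\fB_1\leq\fB_1(g_\xi)$ is minimal. Since $\fB_\xi$ is generated by $\{g_\eta:\eta<\xi\}$, every finite subalgebra is contained in $\la a_1,\ldots,a_n\ra$ for some finite $\{a_1,\ldots,a_n\}\sub\{g_\eta:\eta<\xi\}$, and the whole proof thus reduces to the following finite statement, which is the heart of the argument: \emph{for any finite $\{a_1,\ldots,a_n,g\}\sub\cG$ (whose elements are pairwise non-independent by hypothesis on $\cG$), the extension $\la a_1,\ldots,a_n\ra\leq\la a_1,\ldots,a_n,g\ra$ is minimal.}

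For this finite statement I would show that $g$ \emph{splits at most one atom} of $\fB:=\la a_1,\ldots,a_n\ra$, where ``$g$ splits $c$'' means $c\cap g\ne 0\ne c\cap g^c$; this immediately yields that $\fB(g)$ has at most one more atom than $\fB$, so the cardinalities $|\fB|$ and $|\fB(g)|$ are consecutive powers of $2$ and no algebra can sit strictly between them. Assume for contradiction that two distinct atoms $c=\bigcap_i a_i^{\epsilon_i}$ and $c'=\bigcap_i a_i^{\epsilon_i'}$ of $\fB$ are both split by $g$. Pick a coordinate $i_0$ with $\epsilon_{i_0}\ne\epsilon_{i_0}'$ and, after possibly swapping $c$ and $c'$, assume $c\sub a_{i_0}$ and $c'\sub a_{i_0}^c$. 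The non-independence of the pair $\{a_{i_0},g\}\sub\cG$ forces one of $a_{i_0}\cap g$, $a_{i_0}\cap g^c$, $a_{i_0}^c\cap g$, $a_{i_0}^c\cap g^c$ to equal $0$; but then the corresponding one of $c\cap g$, $c\cap g^c$, $c'\cap g$, $c'\cap g^c$ must also vanish, contradicting the assumption that both $c$ and $c'$ are split.

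I do not anticipate a serious obstacle: the whole argument rests on the short case-analysis above (the four possible ways a pair in $\cG$ can fail to be independent), together with the finite-level reduction provided by Corollary \ref{mg:1.5}. The one point worth flagging is that the hypothesis of Definition \ref{swrn:1} applies only to members of the fixed family $\cG$, not to arbitrary elements of $\fA$; this is exactly why one must reduce to finite subalgebras of the form $\la a_1,\ldots,a_n\ra$ with $a_i\in\cG$ before the combinatorial step can be applied.
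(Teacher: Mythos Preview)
Your proposal is correct. The overall architecture coincides with the paper's: both well-order $\cG$, reduce via Corollary~\ref{mg:1.5} to the finite statement that $\la a_1,\ldots,a_n\ra\leq\la a_1,\ldots,a_n,g\ra$ is minimal whenever $a_1,\ldots,a_n,g\in\cG$, and then prove this finite claim.

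The difference lies in how the finite claim is established. The paper proceeds by induction on $n$, using the criterion of Proposition~\ref{mg:1} (for every $b$ in the smaller algebra, either $b\cap g$ or $b^c\cap g$ lies there) and a somewhat lengthy case analysis. Your route is more direct: you show that $g$ can split at most one atom of $\la a_1,\ldots,a_n\ra$, because two split atoms would differ in some coordinate $a_{i_0}$, and the non-independence of the pair $\{a_{i_0},g\}$ then kills one of the four required intersections. Minimality follows by the cardinality count $2^k$ versus $2^{k+1}$. This is shorter and avoids the induction entirely; it also makes the geometric content (``adjoining $g$ adds exactly one atom'') transparent. The paper's approach, on the other hand, stays closer to the intrinsic characterization of minimal extensions in Proposition~\ref{mg:1} and does not rely on the finiteness of the algebra for the cardinality argument, though in the end both proofs need the reduction to finite subalgebras anyway.
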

\begin{proof}
Take a Boolean algebra $\fA \in \mathcal{I}(1)$ and a family $\mathcal{G}$ generating $\fA$  containing no  independent pairs of elements.
We shall check the following.
\medskip

\noindent{\sc Claim.} For every finite $\mathcal{J} \sub \mathcal{G}$ and every $x \in \mathcal{G}$, the extension
$\la \mathcal{J}\ra  \leq \la \mathcal{J}\cup \lbrace x \rbrace\ra $ is minimal.
\medskip

It is clear that Claim holds if $\mathcal{J}=\{y\}$,
since $x,y$ are not independent. We argue by induction on $|\cJ|$.

Suppose that every extension $\langle \mathcal{J} \rangle \leq \langle \mathcal{J}\cup \lbrace x \rbrace \rangle$ is minimal whenever $|\mathcal{J}|=n$.
	Take $x \in \mathcal{G}$ and $\mathcal{J} \sub \mathcal{G}$ with $|\mathcal{J}|=n+1$.
	We prove that the extension $\langle \mathcal{J} \rangle \leq \langle \mathcal{J}\cup \lbrace x \rbrace \rangle$ is also minimal.
Choose $y \in \mathcal{J} $ and set $\mathcal{S}=\mathcal{J} \setminus \lbrace y \rbrace$.
	We are going to prove that for every $z \in \langle \mathcal{J} \rangle $, $z \cap x$ or $z^c \cap x$ is in $\langle \mathcal{J} \rangle$.
	Since $z \in \langle \mathcal{J} \rangle = \langle \mathcal{S}\cup \lbrace y \rbrace \rangle$, we know that
\[ z= (a \cap y) \cup (b \cap y^c) \mbox{ for some  } a,b \in \langle \mathcal{S} \rangle.\]
	Since $|\mathcal{S}|=n$, we know that $a \cap x $ or $a^c \cap x$ is in $\langle \mathcal{S} \rangle$, and $b \cap x$ or $b^c \cap x$ is in $\langle \mathcal{S} \rangle$.
	
Suppose that  $z \cap x \notin \langle \mathcal{J} \rangle$. Since
	\[ z \cap x = (a \cap y \cap x) \cup (b \cap  y^c\cap x),\]
  then either $a\cap y \cap x \notin \langle \cJ \rangle$ or $b \cap y^c \cap x \notin \langle \cJ \rangle$.

Consider the case when $a \cap y \cap x \notin \langle \mathcal{J} \rangle$. Then $a \cap x\notin \la\cS\ra$ so   $a^c \cap x\in \langle \mathcal{S} \rangle$.
Moreover, it follows that $x\cap y\neq 0$ and $y\not\sub x$. Since $y$ and $x$ are not independent, this leaves us two possibilities: either
$x\sub y$ or $x\cup y=1$. Hence $y^c\cap x=0$ or $y^c\cap x=y^c$ so $y^c\cap x\in \la\cJ\ra$ in both cases. By easy calculation we get
$$ z^c=(a\cap y)^c\cap (b\cap y^c)^c=(a^c\cup y^c)\cap (b^c\cup y)=(a^c\cap b^c)\cup (a^c\cap y)\cup (b^c\cap y^c)=
(a^c\cap y)\cup (b^c\cap y^c),$$
and it follows that
\[z^c\cap x=\big( (a^c\cap x)\cap y\big)\cup \big( (y^c\cap x)\cap b^c\big)\in \la\cJ\ra.\]

If $ b \cap  y^c\cap x\notin \langle \mathcal{J} \rangle$ then in a similar way we get $b^c\cap x\in \langle \mathcal{S} \rangle$ and $x\cap y\in \la\cJ\ra$, giving
$z^c\cap x\in \la\cJ\ra$. This finishes the proof of Claim.

Now we conclude the proof of the theorem applying Claim and Corollary \ref{mg:1.5}.
\end{proof}

Note that Example \ref{mg:3} in fact gives the following.

\begin{CORO}\label{swrn:4}
	There exists a Boolean algebra in $\mathcal{I}(2)$ which is not minimally generated.
\end{CORO}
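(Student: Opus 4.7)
The plan is to observe that the algebra $\fA=\fB\otimes\fB$ constructed in Example \ref{mg:3} already serves as a witness for this corollary; the only extra thing to do is to read the statement ``no three elements of $\mathcal{G}$ are independent'' appearing in that proof as membership in the class $\mathcal{I}(2)$.

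First I would recall that, by Definition \ref{swrn:1}, an algebra belongs to $\mathcal{I}(2)$ iff it admits a generating family with no three-element independent subset. The generating family exhibited in Example \ref{mg:3} is
\[\mathcal{G}=\bigl\{[0,a)\times[0,1):0<a<1\bigr\}\cup\bigl\{[0,1)\times[0,a):0<a<1\bigr\},\]
so it suffices to check that $\mathcal{G}$ contains no independent triple. By a pigeonhole argument, any three elements of $\mathcal{G}$ must contain a pair of the same ``orientation'', i.e.\ either two horizontal strips $[0,a)\times[0,1)$, $[0,b)\times[0,1)$, or two vertical strips $[0,1)\times[0,a)$, $[0,1)\times[0,b)$. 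In either case these two elements are comparable (nested) and hence not independent already as a pair; a fortiori no triple containing them can be independent. Therefore $\fA\in\mathcal{I}(2)$.

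The remaining assertion---that $\fA$ is not minimally generated---is precisely what is established in Example \ref{mg:3}, via Koppelberg's result from \cite{Ko88}. Combining both observations yields the corollary.

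There is no real obstacle here: the content of the corollary is entirely contained in Example \ref{mg:3} once one notices that the quantitative bound ``no three independent elements'' matches the definition of $\mathcal{I}(2)$ exactly. The only mild care required is the pigeonhole step verifying the absence of independent triples in $\mathcal{G}$, which reduces to the trivial observation that chains of clopen sets contain no independent pairs.
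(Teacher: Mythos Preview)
Your proposal is correct and follows exactly the paper's own approach: the paper simply remarks that Example \ref{mg:3} already gives this, since the generating family $\mathcal{G}$ there has no three independent elements, which is precisely the defining condition for $\mathcal{I}(2)$. Your added pigeonhole justification for the absence of independent triples is a harmless elaboration of the one-line observation in Example \ref{mg:3}.
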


We can also strengthen the condition of Definition \ref{wrn:1} in the following way.

\begin{DEFI}\label{swrn:5}
	Let us say that a Boolean algebra is strongly WRN (SWRN) if it is generated by a family containing no infinite independent sequence.
\end{DEFI}

The classes UWRN and SWRN are incomparable, see  Corollary \ref{ncs:4} and the following result.

\begin{PROP}\label{swrn:6}
	There exists a UWRN algebra which is not SWRN.
\end{PROP}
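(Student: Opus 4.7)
Proof plan. I would propose constructing the example as the countable free product
\[
\fA=\bigotimes_{n\in\omega}\fB_n,
\]
where each $\fB_n$ is a copy of the interval algebra $\fB$ on $[0,1)$, generated by the chain $\cC_n=\{[0,t)_n:0<t<1\}$ (as in Example \ref{mg:3}). Its Stone space is $\mathbb{D}^{\omega}$, a countable product of split intervals, which is WRN.

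\textbf{UWRN.} The set $\cG=\bigsqcup_n \cC_n$ generates $\fA$, and each $\cC_n$ is linearly ordered, so $\cC_n\in\cI(1)$. Putting $\cG_n=\cC_{n-1}$ for $n\ge 2$ (and $\cG_0=\cG_1=\emptyset$) gives a decomposition $\cG=\bigcup_n\cG_n$ in which $\cG_n$ contains no independent sequence of length $n$; hence $\fA$ is UWRN.

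\textbf{Not SWRN.} Suppose toward a contradiction that $\cH\sub\fA$ generates $\fA$ and contains no infinite independent sequence. For $h\in\fA$, let $\mathrm{supp}(h)\sub\omega$ denote the minimal finite $F$ with $h\in\bigotimes_{i\in F}\fB_i\hookrightarrow\fA$; this support is well-defined in a free product and subadditive under Boolean operations. Since $\cH$ generates $\fA$ and the element $[0,1/2)_n$ has support $\{n\}$, for each $n$ there must exist $h_n\in\cH$ with $n\in\mathrm{supp}(h_n)$ (else no Boolean combination from $\cH$ could have $n$ in its support). I would then extract an infinite subsequence $(h_{n_k})_k$ with pairwise disjoint supports; the $h_{n_k}$ then depend on disjoint coordinate blocks in the free product and each is nontrivial (as $n_k\in\mathrm{supp}(h_{n_k})$ forces $h_{n_k}\neq 0,1$), so a standard free-product argument shows $(h_{n_k})_k$ is independent, contradicting the hypothesis on $\cH$.

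\textbf{Main obstacle.} The disjoint-support extraction is the crux, and a naive $\Delta$-system on $(\mathrm{supp}(h_n))_n$ only gives either what we want or an infinite subfamily with a nonempty finite root $R$. Handling the root case relies on the uncountability of the factors $\fB_n$: for each $n$, projecting $\fA$ onto $\fB_n$ via any ultrafilter on the remaining coordinates sends $\cH$ to a generating family of the uncountable $\fB_n$, so the fibre $\cH_n=\{h\in\cH:n\in\mathrm{supp}(h)\}$ is itself uncountable. A pigeonhole refinement then lets us prescribe both the common root $R$ and the restriction of the chosen representatives to $R$, reducing the problem to disjoint-support extraction on coordinates outside $R$, where Lemma \ref{pr:1} can be applied to furnish the required independent sequence. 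The uncountability of $\fB$ is essential here: were each factor tree-like and countable (as $\Clop(2^\w)$ with its dyadic basis), this reduction would collapse and the algebra would in fact turn out to be SWRN, which highlights why the present construction succeeds while the natural countable analogues do not.
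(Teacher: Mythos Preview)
Your example $\fA=\bigotimes_n\fB_n$ is a natural candidate and the UWRN verification is fine, but the ``not SWRN'' argument has a real gap precisely at the point you flag as the main obstacle. Elements of a free product do not factor as ``root part $\otimes$ petal part'', so there is no well-defined ``restriction of $h$ to $R$'' that a pigeonhole argument could freeze; and once the $\Delta$-system leaves you with a nonempty root $R$, having disjoint petals does not by itself make the corresponding $h$'s independent, nor does it produce a measure that $\eps$-separates them so that Lemma~\ref{pr:1} would apply. (Concretely, nothing you wrote rules out the possibility that every $h\in\cH$ with $n\in\mathrm{supp}(h)$ also has $0\in\mathrm{supp}(h)$, for all $n$.) So the reduction you describe does not go through as stated.

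The paper sidesteps this by choosing a different $\cI(1)$ factor: the one-point compactification of a discrete set of size $\omega_1$, with generators $e^n_\alpha$ that are pairwise \emph{disjoint} for fixed $n$. This gives a large supply of quotients of $\fA$ onto a free algebra: one can simply set $e^n_\alpha=0$ for all $\alpha$ outside any chosen $A_n\sub\omega_1$. The proof then runs the $\Delta$-system not on supports in $\omega$ but on the finite sets $F_k(e^n_\alpha)\sub\omega_1$ of standard generators occurring when one rewrites $e^n_\alpha$ through $\cH$ and back; after removing the (countable) roots one selects $\alpha_n\in A_n$ so that all these sets are pairwise disjoint across $n$, and then kills every generator except the $e^n_{\alpha_n}$. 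In the resulting quotient each $e^n_{\alpha_n}$ must equal (up to complement) one of the elements of $\cH$ used to express it, exhibiting an infinite independent sequence inside $\cH$. Your chain factors $\fB_n$ do admit analogous collapsing maps (send $[0,s)_n$ to $0$, to an atom, or to $1$ according to whether $s$ lies below, at, or above a chosen threshold $t_n$), so the paper's scheme can in principle be transplanted to your example; but that is the missing work, and it is substantially more than a pigeonhole refinement.
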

\begin{proof}
	
	Let $\fA$ be the algebra of clopen sets of a countable product of one point compactifications of a discrete set of cardinality $\w_1$.
	Let $\mathcal{F} = \{e^n_\alpha : n<\omega, \alpha<\omega_1\}$ be the canonical generators of $\fA$ which are independent except for the relation $e^n_\alpha \cap e^n_\beta = 0$ whenever $\alpha\neq \beta$.
	Clearly the algebra $\fA$ is UWRN. We prove below  that it is not SWRN.

	Suppose that $\mathcal{G}$ is a system of generators. It is enough to check that the image of $\cG$ under some quotient contains an infinite independent sequence.
	Express each $e^n_\alpha$ as a Boolean polynomial of generators from $\mathcal{G}$ and in turn each such generator as a Boolean polynomial of generators from $\mathcal{F}$. Let $F_k(e^n_\alpha)$ be the set of all $\beta<\omega_1$ such that $e^k_\beta$ appears in such expression of $e^n_\alpha$. Notice that for every $\alpha<\w_1$, $n<\w$ each set $F_k(e_\alpha^n)$ is finite and, moreover, $F_k(e_\alpha^n)=\emptyset$ for all except finitely many $k < \w$.  By passing, for each $n$, to an uncountable subset $A_n\sub \omega_1$ (by this we mean, making a quotient that makes each $e^n_\alpha$, $\alpha\not\in A_n$ vanish), we can suppose that for every $n$ there is $m_n<\omega$ such that $F_k(e^n_\alpha) = \emptyset$ if $k\geq m_n$ and $|F_k(e^n_\alpha)|<m_n$ if $k<m_n$. Moreover, we can also suppose that each family $\{ F_k(e^n_\alpha) : \alpha\in A_n\}$ is a $\Delta$-system. By removing all roots (that form just a countable set), we can suppose that the family  $\{ F_k(e^n_\alpha) : \alpha\in A_n\}$ is always pairwise disjoint. Now is easy to get $\alpha_n \in A_n$ such that
	$F_k(e^n_{\alpha_n}) \cap F_q(e^m_{\alpha_m}) = \emptyset$ for all $k,q,n,m$ with $n\neq m$. If we make vanish all generators of $\mathcal{F}$ except the $e^n_{\alpha_n}$'s, we will find that one of the generators from $\mathcal{G}$ (call it $g_n$) in the expression of $e^n_{\alpha_n}$ is $e^n_{\alpha_n}$ itself. We found an infinite independent sequence.
\end{proof}

We  give in the next section (see Corollary \ref{pwi:3})  a characterization of UWRN algebras.

\section{Playing with independence}\label{pwi}

In this section we study the behaviour of families containing no infinite independent sequences or independent sequences of size $n$ for some $n\in \w$.
As a consequence of the results obtained in this section we get a characterization of UWRN algebras similar to the one given in Proposition \ref{wrn:3}.
The proof below is based on the Sauer-Shelah Lemma (\cite{Sa72}, \cite{Sh72}) which is recalled in the appendix together with a proof.

\begin{THEO}\label{pwi:1}
	Let $\mathcal{E}$ be a family in some Boolean algebra $\fA$ such that $\mathcal{E}$ contains no independent family of size $n$. Fix $r\geq 1$ and set $$ I(n,r):=\min \lbrace s \in \w: \binom{rs}{0}+ \binom{rs}{1}+ \dots + \binom{rs}{n-1} <2^s \rbrace .$$
	Then, for any Boolean polynomial $p(x_1, x_2, \dots, x_r)$ the family $p(\mathcal{E})= \lbrace p(a_1, \dots , a_r): a_1, \dots a_r \in \mathcal{E} \rbrace$ contains no independent sequence of length $I(n,r)$.
\end{THEO}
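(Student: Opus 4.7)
The plan is to reduce everything to an application of the Sauer-Shelah lemma. Suppose, for contradiction, that $s := I(n,r)$ and that $p(\mathcal{E})$ admits an independent sequence of length $s$, realised as $p(a^{(1)}), \ldots, p(a^{(s)})$, where each $a^{(j)} = (a^{(j)}_1, \ldots, a^{(j)}_r) \in \mathcal{E}^r$. The key is to package the $rs$ elements $\{a^{(j)}_i : 1 \leq i \leq r,\ 1 \leq j \leq s\}$ into a family of sign patterns in $\{0,1\}^{rs}$ and then use Sauer-Shelah to extract a shattered set of size $n$, which will translate back to an independent subfamily of $\mathcal{E}$ of size $n$, contradicting the hypothesis.

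First, I would pass to the Stone space $K=\ult(\fA)$ and introduce the map $\Phi: K \to \{0,1\}^{rs}$ sending an ultrafilter $u$ to the sign pattern $(\chi_u(a^{(j)}_i))_{i,j}$, where $\chi_u(a)=1$ iff $a\in u$. Let $\mathcal{F}=\Phi(K)\subseteq\{0,1\}^{rs}$; this is the set of sign patterns realised by ultrafilters (equivalently, the atoms of the finite subalgebra generated by the $a^{(j)}_i$). Next, I would encode $p$ by the set $D_p\subseteq\{0,1\}^r$ of value-assignments making $p$ evaluate to $1$, so that $p(a^{(j)})\in u$ iff $(\chi_u(a^{(j)}_1),\ldots,\chi_u(a^{(j)}_r))\in D_p$. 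The independence of $p(a^{(1)}),\ldots, p(a^{(s)})$ then says exactly that the induced map $\Psi: \mathcal{F}\to\{0,1\}^s$, $\alpha\mapsto(\chi_{D_p}(\alpha_{\cdot j}))_{j=1}^s$, is surjective, and therefore $|\mathcal{F}|\geq 2^s$.

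Now I would invoke Sauer-Shelah (as recalled in Appendix A) with ambient set of size $m=rs$: by the very definition of $I(n,r)$, the inequality $|\mathcal{F}|\geq 2^s > \binom{rs}{0}+\cdots+\binom{rs}{n-1}$ forces the existence of a set $S$ of coordinates with $|S|=n$ that is shattered by $\mathcal{F}$. Finally, I would verify that the corresponding $n$ elements $\{a^{(j)}_i : (i,j)\in S\}$ are pairwise distinct---otherwise two coordinates would agree on all of $\mathcal{F}$, which would prevent shattering---and that they are independent in $\fA$, since shattering provides, for every prescribed sign pattern on $S$, an ultrafilter realising it. This furnishes an independent family of size $n$ inside $\mathcal{E}$, yielding the required contradiction.

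The main obstacle I foresee is spotting the right combinatorial bridge between the Boolean hypothesis and the Sauer-Shelah setup; concretely, one must recognise that the independence of $p(a^{(1)}),\ldots,p(a^{(s)})$ is precisely the surjectivity of $\Psi$, and that this surjectivity is what delivers the cardinality bound $|\mathcal{F}|\geq 2^s$ that triggers Sauer-Shelah. Once that bridge is in place, the arithmetic built into the definition of $I(n,r)$ is tailored exactly to fit the hypothesis of the lemma, and the remaining verifications (distinctness and independence of the shattered coordinates) are routine.
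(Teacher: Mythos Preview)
Your proposal is correct and follows essentially the same route as the paper: both arguments reduce to Sauer--Shelah by observing that the independence of $p(a^{(1)}),\ldots,p(a^{(s)})$ forces the set $\mathcal{F}$ of realised sign patterns (equivalently, the atoms of $\langle a^{(j)}_i\rangle$) to have size at least $2^s>\sum_{k<n}\binom{rs}{k}$, whence a shattered $n$-set yields an independent family in $\mathcal{E}$. Your version is phrased via the Stone space rather than atoms of the finite subalgebra, and you are slightly more explicit about why the shattered coordinates give \emph{distinct} elements of $\mathcal{E}$, but the substance is identical.
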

\begin{proof}
	
	Suppose $p(\cE)$ contains an independent sequence of length $I(n,r)$. Then there exist
	\begin{align*}
	b_1 &=p(a_{1,1},a_{1,2}, \dots , a_{1,r}), \\
	b_2 & =p(a_{2,1},a_{2,2}, \dots , a_{2,r}), \\
	&  \dots \\
	b_{I(n,r)} & =p(a_{I(n,r),1},a_{I(n,r),2}, \dots , a_{I(n,r),r})
	\end{align*}
	
	such that $b_1, b_2, \dots, b_{I(n,r)}$ is an independent family.	
	Without loss of generality, we may suppose that $\cE = \lbrace a_{i,j}: 1\leq i \leq I(n,r), 1\leq j \leq r \rbrace $.
	Let us put, for convenience, $N = rI(n,r)$ and $\cE=\lbrace a_1 , a_2, \dots, a_N \rbrace$. Since $\la\cE\ra$ contains an independent family of size $I(n,r)$, it must contain at least $2^{I(n,r)}$ atoms.
	Moreover, every atom of $\la\cE\ra $ has a unique representation of the form $a_1^{f(1)} \cap a_2^{f(2)} \cap \dots \cap a_N ^{f(N)}$, where $f \in 2^{\lbrace 1,2,\dots, N\rbrace}$ and for each element $a \in \la \cE\ra$, we denote the complement of $a$ as $a^0$ and $a$ as $a^1$.
	Set
	\[ \cF = \lbrace f \in 2^{\lbrace 1,2,\dots, N\rbrace}: a_1^{f(1)} \cap a_2^{f(2)} \cap \dots \cap a_N ^{f(N)} \mbox{ is an atom of }\la \cE\ra \rbrace.\]
	
	We claim that $|\cF|\leq \binom{N}{0} + \binom{N}{1}+ \dots + \binom{N}{n-1}$.
	If not, by the Sauer-Shelah Lemma (see Lemma \ref{ssl}) there exists a set $S \sub \lbrace 1,2,\dots,N \rbrace$ with $|S|=n$ such that
	\[ \lbrace f|_S : f \in \cF \rbrace = 2^S.\]
	But this means that $\lbrace a_i: i \in S \rbrace$ is an independent family, since for each $f\in 2^S$, the element $\bigcap_{i \in S} a_i^{f(i)} $ is nonempty because it contains an atom.
	This is in contradiction with the hypothesis on $\cE$, so
	\[|\cF|\leq \binom{N}{0} + \binom{N}{1}+ \dots + \binom{N}{n-1}.\]
	
	Since the number of atoms of $\la\cE \ra$ is exactly $|\cF|$,
	we conclude that
	
	\[ 2^{I(n,r)} \leq \binom{N}{0} + \binom{N}{1}+ \dots + \binom{N}{n-1}= \binom{rI(n,r)}{0} + \binom{rI(n,r)}{1}+ \dots + \binom{rI(n,r)}{n-1},\]
	in contradiction with the definition of $I(n,r)$.	
\end{proof}

Our Theorem \ref{pwi:1} has the following counterpart.

\begin{THEO}\label{pwi:2}
	Let $\cE$ be a family in some Boolean algebra $\fA$ such that $\cE$ contains no infinite independent sequence.
	Let, for a fixed $r$,  $p(x_1,x_2,\ldots, x_r)$ be any Boolean polynomial.
	Then the family
	\[p(\cE)=\{p(a_1,\ldots, a_r): a_1,\ldots, a_r\in\cE\}\]
	contains no infinite independent sequence.
\end{THEO}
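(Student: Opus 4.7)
The plan is to argue by contradiction using Rosenthal's Lemma \ref{pr:2} applied column by column. Suppose $p(\cE)$ contains an infinite independent sequence $(b_k)_{k\in\w}$, say $b_k = p(a_{k,1},\ldots,a_{k,r})$ with $a_{k,j}\in\cE$. Passing to the Stone space $T=\ult(\fA)$, I identify each element of $\fA$ with its clopen Stone image, recalling that independence in $\fA$ is the same as independence of the corresponding clopen subsets of $T$.

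The key step is an iterative extraction of subsequences. Starting from $N_0 = \w$, for $j = 1, \ldots, r$ I apply Lemma \ref{pr:2} to $(\wh a_{k,j})_{k \in N_{j-1}}$: either some infinite $N_j \sub N_{j-1}$ makes $(a_{k,j})_{k\in N_j}$ an independent sequence in $\cE$, in which case $\cE$ already contains an infinite independent sequence and the theorem is proved; or else we find $N_j \sub N_{j-1}$ along which $(\chi_{\wh a_{k,j}})_{k\in N_j}$ converges pointwise on $T$. If the process does not stop earlier, after $r$ steps we have an infinite $N = N_r$ such that each of the $r$ columns $(\chi_{\wh a_{k,j}})_{k\in N}$, $j\le r$, converges pointwise on $T$. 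Since a Boolean polynomial $p$ acts on characteristic functions as the corresponding function $\{0,1\}^r\to\{0,1\}$ pointwise, and $\{0,1\}^r$ is finite, this forces $(\chi_{\wh b_k})_{k\in N}$ itself to converge pointwise on $T$.

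The main obstacle is to close the loop, that is, to rule out the possibility that an infinite independent sequence of clopen sets has pointwise convergent characteristic functions. Because $(b_k)_{k\in N}$ remains independent as a subsequence of an independent sequence, if we enumerate $N = \{k_1 < k_2 < \cdots\}$ then the decreasing chain of clopen sets
\[ D_n = \bigcap_{i=1}^{n}\bigl(\wh b_{k_{2i-1}} \cap \wh b_{k_{2i}}^{c}\bigr) \]
consists of nonempty sets by independence, so by compactness of $T$ any $t \in \bigcap_n D_n$ satisfies $\chi_{\wh b_{k_{2i-1}}}(t) = 1$ and $\chi_{\wh b_{k_{2i}}}(t) = 0$ for every $i$, contradicting pointwise convergence of $(\chi_{\wh b_k})_{k\in N}$. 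Note that the quantitative Theorem \ref{pwi:1} is of no help here, since a family can avoid infinite independent sequences while still containing independent subsequences of every finite length; Rosenthal's dichotomy is precisely what is needed. Beyond the independence-versus-convergence incompatibility above, the argument is essentially bookkeeping around Rosenthal's lemma.
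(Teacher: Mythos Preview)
Your argument is correct, and it takes a genuinely different route from the paper's proof. The paper proceeds by induction on the complexity of the polynomial, treating the basic case $p(x,y)=x\cap y$ via a measure-and-Ramsey argument: from an independent sequence $c_n=a_n\cap b_n$ one gets a measure separating the $c_n$'s, then a Ramsey colouring of pairs decides which column $(a_n)$ or $(b_n)$ is $\eps$-separated on an infinite set, and Lemma~\ref{pr:1} extracts an infinite independent subsequence inside~$\cE$. Your proof bypasses both the induction and the measure: you apply Rosenthal's dichotomy (Lemma~\ref{pr:2}) directly to each of the $r$ columns, observe that a Boolean polynomial acts pointwise as a continuous map $\{0,1\}^r\to\{0,1\}$ so pointwise convergence of all columns forces pointwise convergence of $(\chi_{\wh b_k})$, and then use compactness of the Stone space to see that an independent sequence of clopen sets can never have pointwise convergent indicators. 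This is shorter, handles arbitrary $r$ in one pass, and avoids both Ramsey's theorem and the measure-separation Lemma~\ref{pr:1}; the paper's approach, on the other hand, keeps the argument inside the Boolean algebra and ties in more directly with the measure-theoretic viewpoint developed elsewhere in the paper.
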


\begin{proof}
Consider first the polynomial $p(x,y)=x\cap y$.	
Suppose that $p(\cE)$ contains $c_n=a_n\cap b_n$ with $a_n,b_n\in\cE$ such that the sequence $(c_n)_n$ is independent.
	By the remark following Lemma \ref{pr:1} there is a finitely additive probability measure $\mu$ on $\fA$ such that
	$\mu(c_n)=1/2$ and $c_n$'s are stochastically independent with respect to $\mu$.
	
	For $k \neq n$ we have
	\[ 1/4=\mu(c_k\sm c_n)\le \mu(c_k\sm a_n)+\mu(c_k\sm b_n),\]
	so either $\mu(c_k\sm a_n)\ge 1/8$ or  $\mu(c_k\sm b_n)\ge 1/8$.
	Say that the pair $\{k,n\}$ gets the colour $a$ if the first inequality holds and the colour $b$ otherwise. By the Ramsey theorem there is an infinite $N\sub\omega$
	such that whenever $k,n\in N$ are different then $\{k,n\}$ has the same colour;  say that this is  $a$.
	
	It follows that for $k,n\in N$, $k<n$, we have
	\[ \mu(a_k\btu a_n)\ge \mu(a_k\sm a_n)\ge \mu(c_k\sm a_n)\ge 1/8,\]
	so the family $\{a_n: n\in N\}$ is $1/8$-separated by $\mu$. Applying Lemma \ref{pr:1} we get a contradiction.

	We can assume that $\cE$ is closed under taking complements. If we consider the polynomial $p'(x,y)=x \cup y$ then
$p'(x,y)=(p(x^c,y^c))^c$ so the result follows for $p(\cE)$ by the argument above.

The general case follows by induction on the complexity of the Boolean polynomial in question.
\end{proof}

Using Theorems \ref{pwi:1}  we conclude the following analogue of Proposition \ref{wrn:2}.

\begin{CORO}\label{pwi:3}
A Boolean algebra	 $\fA$  is UWRN if and only if there is a decomposition $\fA=\bigcup_{n\in\omega}\cE_n$ such that no $\cE_n$
		contains an independent sequence of length $n$.

Consequently, the class of UWRN  algebras is stable under taking subalgebras.
\end{CORO}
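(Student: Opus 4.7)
The backward implication is immediate: given $\fA = \bigcup_n \cE_n$ with $\cE_n$ containing no independent sequence of length $n$, take $\cG = \fA$ with $\cG_n = \cE_n$; this witnesses UWRN by Definition \ref{swrn:2}.

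For the forward direction, fix a generating family $\cG = \bigcup_n \cG_n$ of $\fA$ where $\cG_n$ contains no independent sequence of length $n$, and put $\cH_N = \bigcup_{k \leq N} \cG_k$. Partitioning any independent sequence in $\cH_N$ by the smallest $\cG_k$ containing each element, each part sits in $\cG_k$ as an independent sub-sequence and thus has length at most $k-1$; consequently $\cH_N$ contains no independent sequence of length $m_N := N(N-1)/2 + 1$. For every triple $(N, r, p)$ with $r \geq 1$ and $p$ a Boolean polynomial in $r$ variables, Theorem \ref{pwi:1} applied to $\cH_N$ yields that
\[\cF_{N,r,p} := \{p(a_1, \ldots, a_r) : a_1, \ldots, a_r \in \cH_N\}\]
contains no independent sequence of length $L_{N,r,p} := I(m_N, r)$. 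Since $\cG$ generates $\fA$, every element of $\fA$ lies in some $\cF_{N,r,p}$, so $\fA$ decomposes as a union of countably many such pieces.

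Enumerate the pieces as $(\cF_j)_{j\geq 1}$ with corresponding bounds $(L_j)_{j\geq 1}$ and choose a strictly increasing $\sigma: \w \to \w$ satisfying $\sigma(j) \geq L_j$ (e.g., $\sigma(j+1) = \max(\sigma(j)+1, L_{j+1})$); setting $\cE_{\sigma(j)} = \cF_j$ and $\cE_n = \emptyset$ for $n$ outside the range of $\sigma$ produces the required decomposition. Stability under subalgebras is then straightforward: if $\fB \leq \fA$ and $\fA = \bigcup_n \cE_n$ is a decomposition from the characterization, then $\fB = \bigcup_n (\cE_n \cap \fB)$, and each $\cE_n \cap \fB$ inherits the property of containing no independent sequence of length $n$ (independence being intrinsic to the Boolean operations), so $\fB$ is UWRN by the backward direction. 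The substantive step is the forward implication, where the quantitative content of Theorem \ref{pwi:1} does the essential work; the reindexing via $\sigma$ is purely bookkeeping.
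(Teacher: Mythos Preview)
Your proof is correct and follows the approach the paper intends: the paper simply states that the corollary follows from Theorem~\ref{pwi:1}, and you have filled in the details of that deduction (combining the generating pieces into $\cH_N$ with a uniform independence bound, applying Theorem~\ref{pwi:1} to every polynomial, and reindexing). The hereditariness argument is likewise the expected one.
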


\begin{PROB}
Is the class  SWRN hereditary?
\end{PROB}

\section{Measures on UWRN algebras}\label{me}

The following result and Theorem \ref{mg:2} show another common feature of WRN algebras and minimally generated ones.

\begin{PROP} \label{me:1}
If $\mu$ is a  measure on a WRN algebra $\fA$ then $\mu$ has countable type.
\end{PROP}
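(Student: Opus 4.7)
The plan is to combine Proposition~\ref{wrn:2}(ii) with Lemma~\ref{pr:1} to extract a countable subfamily of $\fA$ that is dense for the pseudometric $d_\mu(a,b)=\mu(a\btu b)$, and then to take the subalgebra it generates.

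First I would use the characterization that $\fA=\bigcup_{n}\cE_n$ where no $\cE_n$ contains an infinite independent sequence. For fixed $n$ and $k\ge 1$, Lemma~\ref{pr:1} applied contrapositively tells us that every $(1/k)$-separated (with respect to $\mu$) subfamily of $\cE_n$ must be finite: otherwise it would contain an infinite independent sequence, contradicting the choice of $\cE_n$. Consequently one can select a $(1/k)$-separated subfamily $F_{n,k}\sub \cE_n$ of maximum cardinality, and this $F_{n,k}$ is finite.

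Next I would observe that by the maximality of $F_{n,k}$, for every $a\in\cE_n$ there exists $b\in F_{n,k}$ with $\mu(a\btu b)<1/k$; otherwise $F_{n,k}\cup\{a\}$ would be a strictly larger $(1/k)$-separated family. Let $\fB$ be the subalgebra of $\fA$ generated by the countable set $\bigcup_{n,k} F_{n,k}$. Then $\fB$ is a countable subalgebra of $\fA$. Given any $a\in\fA$, pick $n$ with $a\in\cE_n$; for each $k$ we find $b_k\in F_{n,k}\sub\fB$ with $\mu(a\btu b_k)<1/k$, whence $\inf\{\mu(a\btu b):b\in\fB\}=0$. This is exactly the definition of countable type.

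There is really no serious obstacle here: the only subtle point is verifying that a maximal $(1/k)$-separated subfamily of $\cE_n$ exists and is finite, which is immediate from Lemma~\ref{pr:1} once one notes that any infinite $(1/k)$-separated family would furnish an infinite independent sequence inside $\cE_n$. Everything else is bookkeeping with the pseudometric $d_\mu$ and the standard fact that the subalgebra generated by a countable set is countable.
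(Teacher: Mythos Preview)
Your proof is correct and uses the same two ingredients as the paper's argument, namely the decomposition $\fA=\bigcup_n\cE_n$ from Proposition~\ref{wrn:2}(ii) together with Lemma~\ref{pr:1}. The only difference is cosmetic: the paper argues by contradiction (if $\mu$ were not of countable type there would be an uncountable $\eps$-separated family, hence some $\cE_n$ would contain an infinite independent sequence), whereas you build the countable $d_\mu$-dense subalgebra directly; both routes amount to the same observation that each $\cE_n$ is totally bounded in the pseudometric $d_\mu$.
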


\begin{proof}
Suppose otherwise; note that then there is $\eps>0$ and an uncountable  family $\cF$ such that
$\mu(a\btu b)\ge\eps$ for any distinct $a,b\in\cF$.

Since $\fA$ is WRN, we have a decomposition $\fA=\bigcup_n \cE_n$ as in  Proposition \ref{wrn:2}(ii). But then
$\cF\cap \cE_n$ is uncountable for some $n$ and we arrive at a contradiction with Lemma \ref{pr:1}.
\end{proof}

In fact Proposition \ref{me:1} can be generalized to a result on Borel measures defined on WRN compacta, see
Proposition \ref{ap:1}.

\begin{PROP} \label{me:2}
If $\mu$ is a nonatomic measure on $\fA$ and $\fA\in\cI(1)$ then $\mu$ is strongly countably determined.
\end{PROP}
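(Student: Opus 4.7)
The strategy is to combine the nonatomicity of $\mu$ with the ``trichotomous'' structure of the generators of $\fA\in\cI(1)$ to build a single countable subalgebra that inner-approximates every element. Let $\cG$ be a generating family of $\fA$ with no two independent elements; enlarging $\cG$ to be closed under complements if necessary (this preserves both properties), for any $g,g'\in\cG$ exactly one of $g\sub g'$, $g'\sub g$, $g\cap g'=0$, or $g\cup g'=\mathbf{1}_\fA$ must hold. The central combinatorial observation is a \emph{one-cut lemma}: for every $g\in\cG$ and every finite $\{g_1,\ldots,g_k\}\sub\cG$, at most one atom of $\la g_1,\ldots,g_k\ra$ is split by $g$ (meets both $g$ and $g^c$ nontrivially). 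Indeed, for each $g_j$ the trichotomy with $g$ forces a unique value of $i_j^\ast\in\{0,1\}$ compatible with splitting: if $g_j\sub g$ then $i_j=1$ would give $A\sub g$, so $i_j^\ast=0$; if $g\sub g_j$ then $i_j=0$ would give $A\sub g^c$, so $i_j^\ast=1$; the disjoint and cover cases force $i_j^\ast=0$ and $i_j^\ast=1$ analogously. Thus any split atom equals the unique candidate $A^\ast=\bigcap_j g_j^{i_j^\ast}$.

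Next I would construct the SCD-witness $\fC$ as follows. By nonatomicity, for each $n\ge 1$ fix a finite partition $\pi_n=\{p_1^n,\ldots,p_{k_n}^n\}$ of $\mathbf{1}_\fA$ with $\mu(p_i^n)<1/n$. Each $p_i^n$ belongs to $\la\cG_n\ra$ for some finite $\cG_n\sub\cG$; setting $\cG_0=\bigcup_n\cG_n$ and $\fC=\la\cG_0\ra$ gives a countable subalgebra, and every atom of $\la\cG_n\ra$ is contained in some $p_i^n$ and so has measure strictly less than $1/n$.

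To check SCD, fix $g\in\cG\cup\cG^c$ and $\delta>0$, pick $n$ with $1/n<\delta$, and apply the one-cut lemma inside $\fA_n=\la\cG_n\ra\sub\fC$: the (at most one) candidate split atom $A^\ast$ satisfies $\mu(A^\ast)<\delta$. Then $c=\bigcup\{A:A\text{ atom of }\fA_n,\ A\sub g\}\in\fC$ satisfies $c\sub g$ and $\mu(g\setminus c)=\mu(A^\ast\cap g)\le\mu(A^\ast)<\delta$. An arbitrary $a\in\fA$ lies in $\la g_1,\ldots,g_r\ra$ for some $g_l\in\cG$ and is a disjoint union of at most $2^r$ atoms of the form $\bigcap_l g_l^{i_l}$; inner-approximating each of the $2r$ elements $g_l^\pm$ by elements of $\fC$ with error below $\delta/(r\cdot 2^r)$, intersecting these approximations to approximate each atom, and taking the union yields the desired $c\in\fC$ with $c\sub a$ and $\mu(a\setminus c)<\delta$.

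The main obstacle is the one-cut lemma, which genuinely relies on the $\cI(1)$ hypothesis: without trichotomy a single $g$ could split many atoms of $\la g_1,\ldots,g_k\ra$ simultaneously, and the total measure of the ``boundary'' pieces inside $g$ could remain large even for a very fine partition $\pi_n$. The remaining steps --- packing countably many partitions into one countable $\cG_0$, and reducing the general case to individual generators via atomic decomposition --- amount to routine bookkeeping.
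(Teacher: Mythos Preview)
Your proof is correct and follows essentially the same approach as the paper's. Your ``one-cut lemma'' is precisely the paper's Claim that $b_1\setminus b_0$ is an atom of $\fB$ (the atoms contained in $g$ form $b_0$, those contained in $g^c$ form $b_1^c$, and the at-most-one split atom is $b_1\setminus b_0$); the construction of $\fC$ from countably many fine partitions and the reduction from arbitrary $a\in\fA$ to generators are likewise the same, with the paper simply invoking ``a standard argument'' where you spell out the bookkeeping.
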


\begin{proof}
By the assumption, $\fA=\la \cG\ra $ where $\cG$ contains no independent pair.

Fix $\eps>0$. There is a finite $\cG_0\sub\cG$ such that $\fB=\la \cG_0\ra$ has all atoms of measure $<\eps$. Take any $g\in\cG$ and consider
$b_0,b_1\in\fB$, where  $b_0$ is the maximal element of $\fB$ contained in $g$, while $b_1$ is the minimal element of $\fB$ containing $g$.
\medskip

{\sc Claim.} $b_1\sm b_0$ is an atom of $\fB$.
\medskip

Indeed, for any $h\in \cG_0$, either $h\sub g$ which implies $h\sub b_0$, or $h\cap g= {\bf 0}$ which gives $h\cap b_1= {\bf 0}$, or $g \sub h$ which implies $b_1 \sub h$, or else
$h\cup g= {\bf 1}$ and in this case $h\supseteq b_1\sm b_0$. So $b_1\sm b_0$ is split by no  $h\in\cG_0$ and hence  it is an atom of $\fB$.
\medskip

It follows from Claim that $\mu(b_1\sm b_0)<\eps$, so
\[ \mu(b_0)= \mu(b_1)-\mu(b_1\sm b_0)\ge \mu(g)-\eps,\]
so $b_0$ approximates $g$ from inside; likewise, $b_1^c\sub g^c$ and $\mu(g^c\sm b_1^c)\le \eps$.
Now, taking a countable $\cG'\sub \cG$ such that $\mu$ is nonatomic on $\fC=\la \cG' \ra$, it follows that
for every $g\in\cG$, we have
\[ \mu(g)=\sup\{\mu(c):c\in\fC, c\sub g\} \mbox{ and } \mu(g^c)=\sup\{\mu(c):c\in\fC, c\sub g^c\}.\]
By a standard argument  we conclude that $\mu(a)=\sup\{\mu(c):c\in\fC, c\sub a\}$ for every $a\in\fA$, so
$\mu$ is  strongly countably determined.
\end{proof}

Borodulin-Nadzieja \cite[4.11,4.12]{BN07} proves that a measure on an algebra that is  minimally generated by a sequence of order type $\omega_1$
is strongly countably determined but this is no longer true for arbitrary minimally generated algebras.

\begin{PROB}\label{me:3}
Is it true that for every $n$ and every algebra $\fA\in\cI(n)$, every nonatomic measure on $\fA$ is strongly countably determined?
\end{PROB}

Note that if the answer to the above problem is positive then  every nonatomic measure on an UWRN algebra  is strongly countably determined.
In turn,  this would imply that if $K$ is a zerodimensional compact space with $\Clop(K)$ being a UWRN algebra then every regular Borel measure on $K$ is
concentrated on a separable subspace of $K$.

\begin{PROB}[Rodr\'{\i}guez] \label{me:4}
Let $K$ be a weakly Radon-Nikod\'{y}m compact space. Is it true that every regular Borel measure on $K$ is concentrated on a separable subspace of $K$?
\end{PROB}

That question is motivated by a result due to J.\ Rodr\'{\i}guez, see Appendix B.

\section{Nontrivial convergent sequences}\label{ncs}

As we  noted above, typical spaces without nontrivial converging sequences, such as $\beta\omega$, are not weakly Radon-Nikod\'{y}m compact.
On the other hand, R.\ Haydon proved that there is  a WRN compact space which is not  sequentially compact.
 As far as we are concerned, the following problem is open:

\begin{PROB}(Haydon, \cite{H81})\label{ncs:1}	
Does every WRN compact space contain a nontrivial convergent sequence?
\end{PROB}

In this section we study Problem \ref{ncs:1} for zero-dimensional compact spaces  related to  UWRN and SWRN Boolean algebras.

We recall the construction of D. H. Fremlin used by Haydon  \cite{H81}. This construction provides an example of a SWRN Boolean algebra $\mathfrak{F}$ such that $\ult(\fF)$ is not sequentially compact.

\begin{EXAM}\label{ncs:2}
	Let $\cG$ be a family of subsets of $\w$ maximal with respect to the condition that for every $A,B \in \cG$ there exists $\varepsilon_1, \varepsilon_2 \in \lbrace 0,1 \rbrace$ such that  $A^{\varepsilon_1} \cap B^{\varepsilon_2} $ is finite.
	Let $\fF$ be the subalgebra of subsets of $\w$ generated by $\cG$ (note that $\cG$ contains all finite subsets of $\w$).
 It is clear that $\cG$ does not contain an infinite independent sequence, so $\fF$ is a SWRN algebra.

 Notice that $\ult(\fF)$ contains a natural copy of $\w$ which consists of principal ultrafilters of $\fF$.
 By the maximality of $\cG$ every infinite $A\sub\w$ is split into two infinite parts by some $G\in\cG$. Consequently,
 the sequence of natural numbers $\w\sub \ult(\fF)$ does not contain a converging subsequence, and therefore $\ult(\fF)$ is not sequentially compact.

Note that  $\ult(\fF)\setminus \w$ is a compact space which
is homeomorphic to the Stone space of  the quotient Boolean algebra $\fA=\fF / fin$.
Then $\fA$ is generated by $\cG^\bullet=\{ G^\bullet: G\in\cG\}$. Since no pair from $\cG^\bullet$ is independent, $\fA$ is  in $\cI(1)$.
J. Bourgain proved that every sequence of nonprincipal ultrafilters in $\ult(\fF)$ contains a convergent subsequence, cf.\ \cite{HS80}.
Thus, $\ult(\fA)$ is sequentially compact.
\end{EXAM}

We shall now generalize Bourgain's idea mentioned above.

\begin{LEMM}\label{ncs:4}
	If $\fA$ is a Boolean algebra in $\cI (n)$ for some $n<\w$, then $\ult(\fA)$ is sequentially compact.
\end{LEMM}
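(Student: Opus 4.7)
The plan is to reduce sequential compactness of $\ult(\fA)$ to Rosenthal's dichotomy (Lemma~\ref{pr:2}) applied to the generating family $\cG$. Fix a generating $\cG\sub\fA$ with no independent subfamily of size $n+1$, and let $(u_k)_k$ be a sequence in $\ult(\fA)$. Consider the subsets $V_k=\{g\in\cG:g\in u_k\}$ of the ambient set $T=\cG$, and apply Rosenthal's lemma to the sequence $(V_k)_k$.

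In the convergent alternative, there is an infinite $N\sub\w$ such that $\chi_{V_k}(g)$ stabilises along $N$ for every $g\in\cG$. Since $\cG$ generates $\fA$ and Boolean operations commute with pointwise limits of $\{0,1\}$-valued maps, the assignment $a\mapsto \lim_{k\in N}\chi_{\hat a}(u_k)$ is a well-defined Boolean homomorphism $\fA\to\{0,1\}$, hence an ultrafilter $u$, and $u_k\to u$ along $N$ in $\ult(\fA)$.

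The main obstacle is ruling out the independent alternative. Suppose that $(V_k)_{k\in N}$ is independent as a family of subsets of $\cG$. I would then pick $2^{n+1}$ distinct indices $k_\alpha\in N$ labelled by $\alpha\in\{0,1\}^{n+1}$. For each $i\le n+1$, apply the definition of independence with $I'=\{k_\alpha:\alpha_i=1\}$ and $I''=\{k_\alpha:\alpha_i=0\}$ to produce some $g_i\in\cG$ such that $g_i\in u_{k_\alpha}$ iff $\alpha_i=1$. Then for every $\delta\in\{0,1\}^{n+1}$ the ultrafilter $u_{k_\delta}$ contains $g_i^{\delta_i}$ for all $i$, so $\bigcap_{i=1}^{n+1} g_i^{\delta_i}\neq 0$ in $\fA$. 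This exhibits $\{g_1,\dots,g_{n+1}\}\sub\cG$ as an independent family, contradicting $\fA\in\cI(n)$. Hence only the convergent alternative can occur, and $\ult(\fA)$ is sequentially compact.
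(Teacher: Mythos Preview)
Your proof is correct and follows essentially the same approach as the paper: apply Rosenthal's dichotomy to the traces $V_k=\{g\in\cG:g\in u_k\}$, handle the convergent branch by assembling an ultrafilter from the limit, and in the independent branch select $2^{n+1}$ indices and use set-independence of the $V_k$'s to extract $n+1$ generators that are Boolean-independent (witnessed by the corresponding ultrafilters). The only difference is cosmetic---the paper encodes the $2^{n+1}$ indices via a bijection $\varphi:\{1,\dots,2^{n+1}\}\to\{0,1\}^{n+1}$ and coordinate functions $r_i=\pi_i\circ\varphi$, whereas you label indices directly by $\alpha\in\{0,1\}^{n+1}$; the resulting choice of $g_i$ is identical.
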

\begin{proof}
Let $\cG$ be a set of generators of $\fA$ such that $\cG$ contains no $n+1$ independent elements.
Take a sequence $(x_n)_n$ in $\ult(\fA)$ and suppose that it admits no convergent subsequence.

For every $k\in\omega$ put $A_k=\{g\in\cG: g\in x_k\}$
and consider the sequence $(A_k)_k$ of subsets of $\cG$.

Note that if $N\sub\w$ is infinite then the subsequence $(\chi_{A_k})_{k\in N}$ cannot converge pointwise; indeed, otherwise,
if $\chi_A=\lim_{k\in N} \chi_{A_k}$ then it is easy to check that there is $x\in\ult(\fA)$ such that $x\supseteq A\cup\{g^c:g\in\cG\sm A\}$
and $\lim_{k\in N} x_k=x$.

Hence, by Lemma \ref{pr:2}, $(A_k)_k$ contains an independent subsequence. For the rest of the proof we can assume that
the whole sequence $(A_k)_k$ is independent. We shall check that this is in contradiction with the assumption $\fA$ being in
$\cI(n)$.

Let $I=\{1,2,\ldots, 2^{n+1}\}$. Take any bijection $\vf$ between $I$ and the finite product $\{0,1\}^{n+1}$ and
let the functions $r_k:I\to 2$ be defined as $r_i=\pi_i\circ\vf$ for $i=1,2,\ldots, n+1$, where
$\pi_i$ is the projection. Then the family $(r_i)_{i\le n+1}$ has the property
that for every $\eps\in \{0,1\}^{n+1}$ there is $k\in I$ such that $\eps=(r_i(k))_{i\le n+1}$.

Now for every $i\le n+1$, using the independence of the sets $A_k$ we can pick
\[ g_i\in \bigcap_{k=1}^{2^{n+1}} A_k^{r_i(k)},\]
where, for a set $A$, we mean $A^1=A$ and $A^0$ is its complement.

We claim that $\cG_0=\{g_i: i\le n+1\}$ is an independent subfamily of $\cG$. Indeed, take any $\eps\in\{0,1\}^{n+1}$ and choose $k\in I$ such that
$\eps=(r_i(k))_{i\le n+1}$. If $r_i(k)=\eps(i)=1$ then $g_i\in A_k$ which means that $g_i\in x_k$. If $r_i(k)=\eps(i)=0$
then $g_i\notin A_k$, i.e.\ $g_i\notin x_k$, which is equivalent to $g_i^c\in x_k$. It follows that
$g_i^{\eps(i)}\in x_k$ for every $i\le n+1$ so $\bigcap_{i\le n+1} g_i^{\eps(i)}\neq 0$. This shows that $\cG_0$ is independent and we get
a contradiction, as required.
\end{proof}

\begin{THEO}\label{ncs:5}
	If $\fA$ is an UWRN Boolean algebra, then $\ult(\fA)$ is sequentially compact.
\end{THEO}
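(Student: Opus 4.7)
The plan is to combine the argument of Lemma \ref{ncs:4} with a diagonal procedure across the decomposition of the generating family. Fix $\cG=\bigcup_{n\in\omega}\cG_n$ witnessing that $\fA$ is UWRN, so that $\cG_n$ contains no independent sequence of length $n$. Given a sequence $(x_k)_{k\in\omega}$ in $\ult(\fA)$, set $A^n_k=\{g\in\cG_n:g\in x_k\}\sub\cG_n$. The goal is to extract an infinite $N\sub\omega$ such that, for every $g\in\cG$, the sequence $(\chi_{\wh{g}}(x_k))_{k\in N}$ is eventually constant, and then to promote this to honest convergence in $\ult(\fA)$.

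Next, construct a decreasing sequence $\omega=N_0\supseteq N_1\supseteq\cdots$ of infinite subsets of $\omega$ by iterated application of Rosenthal's lemma (Lemma \ref{pr:2}) to $(A^n_k)_{k\in N_{n-1}}$, viewed as a sequence of subsets of $\cG_n$. The ``independent subsequence'' branch of the dichotomy must be ruled out: from $2^n$ independent values $A^n_{k_1},\ldots,A^n_{k_{2^n}}$ one repeats the combinatorial step from the proof of Lemma \ref{ncs:4} --- fix a bijection $\vf\colon\{1,\ldots,2^n\}\to\{0,1\}^n$, set $r_i=\pi_i\circ\vf$ for $i=1,\ldots,n$, and use set-theoretic independence of the $A^n_{k_j}$'s to pick $g_i\in\bigcap_{j=1}^{2^n}(A^n_{k_j})^{r_i(j)}\sub\cG_n$; then $\{g_1,\ldots,g_n\}$ is Boolean-independent in $\fA$, contradicting the defining property of $\cG_n$. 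Hence at each stage we land in the ``pointwise convergent'' branch and obtain $N_n\sub N_{n-1}$ along which $(\chi_{A^n_k})_{k\in N_n}$ converges pointwise on $\cG_n$.

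Finally, diagonalize: choose $k_n\in N_n$ strictly increasing and set $N=\{k_n:n\in\omega\}$, so that $N\sm N_n$ is finite for every $n$. Then for each $g\in\cG$, say $g\in\cG_{n_0}$, the value $\chi_{\wh{g}}(x_k)$ stabilizes along $N$ at some $\eps(g)\in\{0,1\}$. The family $\{g^{\eps(g)}:g\in\cG\}$ has the finite intersection property --- any finite subfamily is contained in $x_k$ for all sufficiently large $k\in N$ --- so it extends to an ultrafilter $x\in\ult(\fA)$; since $\cG$ generates $\fA$, convergence on $\cG$ propagates to all of $\fA$ through Boolean combinations, giving $x_k\to x$ along $N$. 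The principal delicate point is the refutation of the independence alternative in Rosenthal's lemma, and this is exactly the combinatorial construction at the end of Lemma \ref{ncs:4}, applied level-by-level to each $\cG_n$.
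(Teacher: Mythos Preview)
Your proof is correct. It is, however, an explicit unfolding of the argument the paper gives in two lines: the paper sets $\fA_n=\la\cG_n\ra$, notes that $\ult(\fA)$ embeds as a closed subspace of $\prod_n \ult(\fA_n)$, and then simply invokes Lemma~\ref{ncs:4} together with the standard fact that countable products (and closed subspaces) of sequentially compact spaces are sequentially compact. Your level-by-level application of Rosenthal's dichotomy, refutation of the independence branch via the $2^n$-to-$n$ trick, and subsequent diagonalization are exactly the content of Lemma~\ref{ncs:4} plus the classical proof of that product theorem, done by hand inside a single argument. The paper's packaging is shorter and more modular; your version has the virtue of being self-contained and making the role of the bound $n$ on $\cG_n$ visible at the point where it is used.
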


\begin{proof}
Since $\fA$ is  UWRN we have $\fA=\la\cG\ra$, and the decomposition $\cG=\bigcup_n \cG_n$ as in Definition  \ref{swrn:1}.
If we let $\fA_n=\la\cG_n\ra$ for every $n$ we have an obvious embedding
\[\ult (\fA)\to \prod_{n=1}^\infty \ult(\fA_n).\]
We conclude the proof applying Lemma \ref{ncs:4} and the fact that the class of sequentially compact spaces is stable under closed subspaces and countable products.
 \end{proof}

\begin{CORO}\label{ncs:6}
	The Boolean algebra $\fF$ from Example \ref{ncs:2} is SWRN but not UWRN.
\end{CORO}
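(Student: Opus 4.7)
The plan is to derive this corollary directly from what has been established earlier in the paper; no fresh construction is needed. The SWRN property of $\fF$ is already built into Example \ref{ncs:2}: the generating family $\cG$ there was observed to contain no infinite independent sequence, which is precisely Definition \ref{swrn:5}. So only the negative statement requires work, and even that reduces to a one-step contrapositive of a theorem we already have.

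For the claim that $\fF$ is not UWRN, the natural route is to invoke Theorem \ref{ncs:5}. That theorem tells us that if $\fF$ were UWRN, then $\ult(\fF)$ would be sequentially compact. The plan is therefore to exhibit a sequence in $\ult(\fF)$ with no convergent subsequence, which is exactly what Example \ref{ncs:2} already records: the natural copy of $\w \subseteq \ult(\fF)$ (the principal ultrafilters) has no convergent subsequence, because maximality of $\cG$ guarantees that any infinite $A \subseteq \w$ is split into two infinite parts by some $G \in \cG$, so no tail of a subsequence can be eventually contained in $\widehat{G}$ or $\widehat{G^c}$. Hence $\ult(\fF)$ is not sequentially compact, and the contrapositive of Theorem \ref{ncs:5} forces $\fF$ to fail the UWRN condition.

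There is essentially no obstacle here: the corollary is a clean packaging of Theorem \ref{ncs:5} together with the two facts (SWRN and non-sequential-compactness of the Stone space) already verified during the discussion of Example \ref{ncs:2}. The only thing to be careful about in the write-up is to cite Definitions \ref{swrn:2} and \ref{swrn:5} and Theorem \ref{ncs:5} explicitly, so that the logical dependence of the two incomparability statements (this corollary and Proposition \ref{swrn:6}) is transparent to the reader.
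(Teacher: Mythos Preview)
Your proposal is correct and matches the paper's intent: the corollary is stated without proof precisely because it follows immediately from the SWRN property and non-sequential-compactness of $\ult(\fF)$ established in Example \ref{ncs:2}, together with the contrapositive of Theorem \ref{ncs:5}. There is nothing to add.
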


We remark that  another example of a SWRN Boolean algebra which is not UWRN is given by the well-known example of an Eberlein compact space which is not uniformly Eberlein constructed by Benyamini and Starbird \cite{BS76}.

We finally remark that, although there are WRN algebras which are not SWRN, in order to give an answer to Problem \ref{ncs:1} for zero-dimensional compact spaces, it is enough to consider SWRN algebras because we can use a similar argument as in the proof of Theorem \ref{ncs:5}.

\appendix

\section{Sauer-Shelah Lemma}

The following proof of the Sauer-Shelah Lemma (\cite{Sa72}, \cite{Sh72}) is based on the proof contained in Gil Kalai's blog \cite{GK}.

\begin{LEMM}[Sauer-Shelah]\label{ssl}
Let $N,n$ be natural numbers with $1\le n\le N$ and let $T=\lbrace 1,2,\dots,N \rbrace$.
Then for every family $C \sub 2^T$ with
\[|C|> \binom{N}{0}+ \binom{N}{1}+ \dots + \binom{N}{n-1},\]
there exists a set $S \sub T$ with $|S|=n$ such that $ \lbrace f|_S : f \in C \rbrace = 2^S.$
\end{LEMM}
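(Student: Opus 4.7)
My plan is to deduce the lemma from a stronger statement due to Pajor: for any family $C\sub 2^T$, the number of subsets of $T$ shattered by $C$ (that is, subsets $S$ for which $\{f|_S : f\in C\}=2^S$) is at least $|C|$. Granting this, if $|C|> \binom{N}{0}+\binom{N}{1}+\cdots+\binom{N}{n-1}$, then $C$ shatters strictly more sets than the number of subsets of $T$ of size less than $n$, so it must shatter some set of size $\ge n$; any $n$-element subset of such a shattered set is again shattered, which is exactly the conclusion we seek.

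I would prove Pajor's statement by induction on $N=|T|$. The base case $N=0$ is trivial. For the inductive step, fix $x\in T$, write $T'=T\sm\{x\}$, and split $C=C_0\cup C_1$ according to the value of $f(x)$. Let $D=\{f|_{T'}:f\in C\}\sub 2^{T'}$ be the projected family, and let $E=C_0|_{T'}\cap C_1|_{T'}\sub 2^{T'}$ be the set of projections that lift back to both $C_0$ and $C_1$. The identity $|C|=|D|+|E|$ follows from $|C|=|C_0|+|C_1|=|C_0|_{T'}|+|C_1|_{T'}|$ together with inclusion--exclusion on the union of the two projections.

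By the inductive hypothesis, $D$ shatters at least $|D|$ subsets of $T'$ and $E$ shatters at least $|E|$ subsets of $T'$. Every subset of $T'$ shattered by $D$ is also shattered by $C$, because any preimage under the projection realises the same restriction. For a subset $S\sub T'$ shattered by $E$, the set $S\cup\{x\}$ is shattered by $C$: given any $h:S\cup\{x\}\to\{0,1\}$, pick $g\in E$ with $g|_S=h|_S$; since $g$ is the projection of an element of $C_0$ and of an element of $C_1$, the lift matching $h(x)$ belongs to $C$ and restricts correctly to $S\cup\{x\}$. The two families of shattered sets (those contained in $T'$ and those containing $x$) are disjoint, so $C$ shatters at least $|D|+|E|=|C|$ subsets, completing the induction.

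The only delicate points are the bookkeeping identity $|C|=|D|+|E|$ and the verification that the lifted shattered sets from the two inductive pieces are genuinely distinct; beyond that the argument is purely formal.
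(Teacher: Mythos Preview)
Your proof is correct and follows essentially the same route as the paper: both first establish Pajor's strengthening (the paper states it as a \textsc{Claim}) that the number of sets shattered by $C$ is at least $|C|$, and then deduce Sauer--Shelah by counting subsets of size $<n$. The only cosmetic difference is in the inductive bookkeeping: the paper inducts on $|C|$, applying the hypothesis to $C_0$ and to $C_1|_{T'}$ and combining via $\FF_0\cup\FF_1\cup\{S\cup\{1\}:S\in\FF_0\cap\FF_1\}$, whereas you induct on $|T|$ using the projection $D=C|_{T'}$ and the overlap $E=C_0|_{T'}\cap C_1|_{T'}$ together with the identity $|C|=|D|+|E|$; these two decompositions are equivalent reorganisations of the same inclusion--exclusion count.
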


\begin{proof}
	We first prove the following stronger result:
\medskip
	
\noindent	{	\sc Claim.}
 For every family $C \sub 2^T$ there exists a family of sets $\mathcal{F} \sub \PP(T) $
 such that $|\FF|=|C|$ and
 \[ \lbrace f|_S : f \in C \rbrace = 2^S \mbox{ for any }S \in \FF.\]
%\medskip	
	
We check the claim by induction on $|C|$. If $|C|=1$ then take $\mathcal{F}=\lbrace \emptyset \rbrace$.
	Suppose $|C|\geq 2$. Without loss of generality, we may suppose that both the families
\[  C_0= \lbrace f \in C: f(1)=0 \rbrace \mbox{ and }  C_1= \lbrace f \in C: f(1)=1 \rbrace,\]
	are nonempty. Put $T'=T\sm\{1\}$.
	By induction, there exists $\FF_0 \sub \PP( T')$ with $|\FF_0|=|C_0|$ such that
	\[ \lbrace f|_S : f \in C_0 \rbrace = 2^S \mbox{ for any }S \in \FF_0 .\]
	Now take $C_1'= \lbrace f|_{ T' }: f \in C_1 \rbrace$.
	Again by induction,  there exists $\FF_1 \sub \PP(T')$ with $|\FF_1|=|C_1'|$ such that
	\[ \lbrace f|_S : f \in C_1' \rbrace = 2^S \mbox{ for any }S \in \FF_1 .\]
	
	Set
\[\FF= \FF_0 \cup \FF_1 \cup \lbrace S\cup\lbrace 1 \rbrace : S \in \FF_0 \cap \FF_1 \rbrace,\]
	and note that
\[ |\FF|= |\FF_0| + |\FF_1|= |C_0|+|C_1'|=|C_0|+|C_1|=|C|.\]
	Therefore it is enough to prove that $ \lbrace f|_S : f \in C \rbrace = 2^S \mbox{ for any }S \in \FF $,
	but this is a consequence of the properties of $\FF_0$ and $\FF_1$. Thus the claim is proved.
\medskip
	
	Now the lemma follows from the fact that $T$ has exactly $\binom{N}{0}+ \binom{N}{1}+ \dots + \binom{N}{n-1}$ subsets of cardinality smaller than $n$, so
by the assumption on  $|C|$  there exists a set $S \sub T$ with $|S|\ge n$ such that $ \lbrace f|_S : f \in C \rbrace = 2^S.$	
\end{proof}

%\appendix

\section{On measures on  WRN compacta}

We enclose here two results on regular Borel measures defined on WRN compacta that are proved in an unpublished note by Jos\'e Rodr\'{\i}guez.

\begin{PROP} \label{ap:1}
If $K$ is WRN compact and if $\mu$ is a  probability regular Borel measure on $K$ then $\mu$ has countable type
(i.e.\ $L_1(\mu)$ is separable).
\end{PROP}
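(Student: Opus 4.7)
Since $K$ is WRN compact, we may view $K$ as a weak$^*$-compact subset of the dual $X^\ast$ of some Banach space $X$ not containing an isomorphic copy of $\ell_1$. For each $x\in X$ let $\wh{x}\colon K \to \R$ denote the (weak$^*$-continuous) evaluation map $\wh{x}(k)=k(x)$. Since $K$ is weak$^*$-compact, $M:=\sup_{k\in K}\|k\|<\infty$, so each $\wh{x}$ lies in $C(K)\sub L_\infty(\mu)\sub L_1(\mu)$, and the linear operator $T\colon X\to L_1(\mu)$, $T(x)=\wh{x}$, is bounded. The plan is to show that $T$ is a compact operator, and then to combine this with a Stone--Weierstrass argument to conclude that $L_1(\mu)$ is separable.

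To prove compactness of $T$, take a bounded sequence $(x_n)$ in $X$. Since $X\not\supseteq \ell_1$, Rosenthal's $\ell_1$-theorem yields a weakly Cauchy subsequence $(x_{n_j})$. Each $k\in K\sub X^\ast$ is, in particular, a functional on $X$, so $\wh{x}_{n_j}(k)=k(x_{n_j})$ is Cauchy in $\R$; thus $\wh{x}_{n_j}$ converges pointwise on $K$ to some bounded Borel function $f$. As $(\wh{x}_{n_j})$ is uniformly bounded in $L_\infty(\mu)$, the dominated convergence theorem forces $\wh{x}_{n_j}\to f$ in $L_1(\mu)$. Hence $T(B_X)$ is relatively sequentially compact and, $L_1(\mu)$ being metric, relatively compact; so $T$ is compact.

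From compactness, $T(nB_X)=nT(B_X)$ is compact, hence separable in $L_1(\mu)$, for every $n$. Choose a countable $D_n\sub nB_X$ such that $T(D_n)$ is $L_1$-dense in $T(nB_X)$ and set $D=\bigcup_n D_n$; note that for $x\in nB_X$ we can find $y\in D_n$ with $\|T(x)-T(y)\|_{L_1}$ as small as desired and the sup-norm bound $\|T(y)\|_\infty\le nM$. Since $\{\wh{x}:x\in X\}$ separates points of $K$, Stone--Weierstrass implies that the real algebra $A$ generated by $T(X)\cup\{1\}$ is uniformly dense in $C(K)$; by regularity of $\mu$, $C(K)$ is $L_1$-dense in $L_1(\mu)$, so $A$ is $L_1$-dense. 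It remains to show that the countable rational-coefficient subalgebra $A_0\sub A$ generated by $T(D)\cup\{1\}$ is $L_1$-dense in $A$. For a monomial $T(x_1)\cdots T(x_k)$ with $x_i\in n_iB_X$, picking $y_i\in D_{n_i}$ $L_1$-close to $T(x_i)$, the telescoping identity combined with the common sup-norm bound gives
\[\Bigl\|\prod_{i=1}^{k}T(x_i)-\prod_{i=1}^{k}T(y_i)\Bigr\|_{L_1}\le \sum_{i=1}^{k}\Bigl(\prod_{j\neq i} n_j M\Bigr)\,\|T(x_i)-T(y_i)\|_{L_1},\]
which can be made arbitrarily small. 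Hence $L_1(\mu)$ is separable, i.e.\ $\mu$ has countable type.

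The main obstacle will be the algebra-approximation in the last paragraph: we must maintain simultaneous $L_1$-closeness and $L_\infty$-control on the approximating elements so that products of approximations still approximate products, which forces us to choose the countable dense set separately inside each ball $nB_X$ rather than globally inside $T(X)$. The compactness of $T$ in the second paragraph is the crucial step where the WRN hypothesis enters, via Rosenthal's theorem together with the dominated convergence theorem; the remaining arguments are essentially routine extensions of Stone--Weierstrass.
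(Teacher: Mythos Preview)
Your proof is correct, but it takes a longer path than the paper's. The paper uses the equivalent characterization of WRN compacta mentioned in Section~\ref{wrn}: $C(K)$ is weakly precompactly generated, i.e.\ $C(K)=\SPAN(W)$ for some weakly precompact $W\sub C(K)$. One then checks that the image of $W$ under the embedding $C(K)\hookrightarrow L_1(\mu)$ is norm-separable (otherwise an $\eps$-separated sequence in $W$ would have no weakly Cauchy subsequence---the core argument is the same pointwise-convergence-plus-dominated-convergence step you use for compactness of $T$). Since $W$ already spans $C(K)$ \emph{linearly}, separability of its image immediately gives separability of $C(K)$ in $L_1(\mu)$, and hence of $L_1(\mu)$.

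Your route differs in that you work directly from the dual-embedding definition, so the set $T(X)$ only generates $C(K)$ as an \emph{algebra}, forcing the Stone--Weierstrass step and the careful product-approximation with simultaneous $L_\infty$ control. This is perfectly valid and self-contained, but the paper's use of the WPG characterization bypasses all of that: once the generating set sits inside $C(K)$ and spans it linearly, no algebra manipulations are needed. Conversely, your argument has the small advantage of not invoking the (Davis--Figiel--Johnson--Pe\l czy\'nski based) equivalence between the two descriptions of WRN compacta.
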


\begin{proof}
 As we mentioned in section \ref{wrn}, if $K$ is WRN compact then $\CC(K)$ is spanned by some weakly precompact set $W\sub \CC(K)$.
 If we consider the natural embedding $\CC(K)\hookrightarrow L_1(\mu)$ then the image of $W$ is norm-separable. Indeed, otherwise
 for some $\eps>0$ we could find functions $f_n\in W$ such that $\int_K |f_n-f_k|\;{\rm d}\mu\ge\eps$ for $n\neq k$. But then $(f_n)_n$
 admits no weakly Cauchy subsequence, a contradiction.

 Since $W$ is norm-separable in $L_1(\mu)$,  a standard argument gives that $\CC(K)=\SPAN(W)$ is also norm-separable in $L_1(\mu)$.
 But $\CC(K)$ is dense in $L_1(\mu)$ so $L_1(\mu)$ is separable itself.
\end{proof}

Let $X$ be a Banach space and let  $K$ be a $weak^*$-compact subset of the dual unit ball  $B_{X^*}$.
Let $\mu$ be a regular  probability measure on $K$;  denote by  $f: K \to X^*$ the identity function.
Then for every $B \in {\rm Borel}(K)$ there is a vector $\nu(B)=\int_B f \; {\rm d}\mu\in X^*$
which is the Gelfand integral of $f$ on $B$, that is

\begin{equation}\label{eqn:1}
\la \nu(B),x\ra=\int_B x\; {\rm d}\mu
\end{equation}

for every $x\in X$, see \cite[page 53]{DU77}. Here every $x\in X$ is seen as a continuous function $K\ni x^*\to x^*(x)$ on $K$.
In other language,  $\nu(B)$ is the barycenter of a measure $1/\mu(B)\cdot \mu_B$ which is the normalized restriction of $\mu$ to $B$.

\begin{THEO}\label{ap:2}
Suppose that $X$ is a Banach space not containing $\ell^1$ and that the set $K\sub B_{X^*}$ is $weak^*$ compact and convex.
Then for every  probability regular Borel measure $\mu$ on $K$ there is a $weak^*$-closed and $weak^*$-separable set $L \sub K$ such that $\mu(L)=1$.
\end{THEO}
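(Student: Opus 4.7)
The plan is to exhibit $L$ concretely as the intersection of $K$ with a natural $w^*$-closed subspace of $X^*$ determined by $\mu$. First I would invoke Proposition \ref{ap:1} to ensure that $L_1(\mu)$ is separable. Then I consider the canonical bounded linear operator $j\colon X \to L_1(\mu)$ given by $j(x) = [x^* \mapsto \langle x, x^*\rangle]$, and set $V := \ker j$, which is the norm-closed subspace of those $x \in X$ for which $\langle x, \cdot\rangle$ vanishes $\mu$-almost everywhere on $K$. The induced injection $X/V \hookrightarrow L_1(\mu)$ shows that the quotient $X/V$ is separable.

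My candidate for the desired set is $L := V^\perp \cap K$, where $V^\perp = \{y^* \in X^* : \langle v, y^*\rangle = 0 \mbox{ for every } v \in V\}$ is the annihilator of $V$. The standard isometric identification $V^\perp \cong (X/V)^*$ matches the $w^*$-topology inherited from $X$ with the $w^*$-topology coming from $X/V$. Since $X/V$ is separable, bounded subsets of $(X/V)^*$ are $w^*$-metrizable compact, and therefore $L$ --- being a bounded $w^*$-closed subset of $V^\perp$ --- is $w^*$-metrizable compact, and in particular $w^*$-separable, which delivers the topological properties required by the theorem.

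It remains to verify that $\mu(L) = 1$, equivalently that $\mu(K \setminus V^\perp) = 0$. By inner regularity of $\mu$ it suffices to show $\mu(C) = 0$ for every $w^*$-compact $C \sub K \setminus V^\perp$. Given such a $C$, for each $x^* \in C$ one can pick some $v_{x^*} \in V$ with $\langle v_{x^*}, x^*\rangle \neq 0$; then $U_{x^*} := \{y^* \in K : \langle v_{x^*}, y^*\rangle \neq 0\}$ is a $w^*$-open neighborhood of $x^*$ in $K$. By $w^*$-compactness, finitely many $U_{x_1^*}, \ldots, U_{x_k^*}$ already cover $C$; since each $v_{x_i^*}$ lies in $\ker j$, the continuous function $\langle v_{x_i^*}, \cdot\rangle$ on $K$ is zero $\mu$-almost everywhere, so $\mu(U_{x_i^*}) = 0$, and therefore $\mu(C) \le \sum_i \mu(U_{x_i^*}) = 0$.

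The subtle point I want to flag is precisely the last step. Knowing only that $\{\langle v, \cdot\rangle \neq 0\}$ is $\mu$-null for each individual $v \in V$ does not, on its own, imply $\mu$-a.e.\ membership of $x^*$ in $V^\perp$ when $V$ is non-separable, because the naive candidate null set is an uncountable union. The $w^*$-compactness-plus-inner-regularity argument above is the device that bypasses this classical obstruction by extracting, from any would-be violation of $x^* \in V^\perp$, a finite witness family inside $V$ whose null sets are controllable. Incidentally, the hypothesis that $X$ excludes $\ell_1$ enters the argument only through Proposition \ref{ap:1}, and convexity of $K$ plays no explicit role in this plan.
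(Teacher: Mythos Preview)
There is a genuine gap, and it is not at the step you flag. Your argument that $\mu(V^\perp \cap K)=1$ via inner regularity and a finite cover is correct. The gap is earlier: the inference ``$X/V \hookrightarrow L_1(\mu)$ injectively and boundedly, hence $X/V$ is separable'' is invalid. A bounded linear injection into a separable Banach space does not force the domain to be separable --- witness the inclusion $L_\infty[0,1]\hookrightarrow L_1[0,1]$. Without norm-separability of $X/V$ you cannot conclude that bounded subsets of $V^\perp\cong (X/V)^*$ are $w^*$-metrizable, and the $w^*$-separability of $L=V^\perp\cap K$ is left unproved.

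Your closing remark that convexity plays no role is in fact a warning sign rather than a bonus. Since $V=\{x\in X:\langle x,\cdot\rangle\equiv 0 \text{ on }\supp\mu\}$, one has $\supp\mu\sub V^\perp\cap K=L$; hence if $L$ were always $w^*$-separable irrespective of convexity, every regular Borel probability on every WRN compactum would have separable support, resolving Rodr\'{\i}guez's open Problem~\ref{me:4}. The paper's proof uses convexity in an essential way: it takes $L$ to be the $w^*$-closed convex hull of the norm-separable set of barycenters $S=\{\mu(B)^{-1}\int_B f\,{\rm d}\mu:\mu(B)>0\}$, and convexity of $K$ is precisely what ensures $S\sub K$ (and hence $L\sub K$).
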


\begin{proof}
Consider the set
\[ S:=\left\{\frac{1}{\mu(B)}\int_B f \, d\mu: \, B \in {\rm Borel}(K), \, \mu(B)>0 \right\}. \]
As above, we write $\nu(B)=\int_B f \; {\rm d}\mu$ for simplicity.
\medskip

\noindent {\sc Claim.} The set $S$ is norm-separable.
\medskip

The space  $L_1(\mu)$ is separable by Proposition \ref{ap:1} so there is  a countable family $\cA$ of Borel subsets of $K$ of positive measure
such that $\inf\{\mu(A\btu B):A\in\cA\}=0$ for every Borel set $B\sub K$. Note that
\begin{equation}\label{eqn:2}
\left\| \nu(B)-\nu(A)\right\|=\sup_{x\in B_X} \left| \int_B x\;{\rm d}\mu - \int_A x\;{\rm d}\mu\right| \le \mu(B\btu A).
\end{equation}

Fix $\eps>0$ and   a Borel set $B$ of positive measure; take $A\in\cA$ such that $\mu(B\btu A)<\eps\cdot \mu(B)$ and
$|1/\mu(B)-1/\mu(A)|<\eps$. Then, using \eqref{eqn:2} we get
$$
\left\| \frac{1}{\mu(B)} \nu(B)- \frac{1}{\mu(A)} \nu(A) \right\| \le
\left\| \frac{1}{\mu(B)} \nu(B)- \frac{1}{\mu(B)} \nu(A)\right\|+
\left\| \frac{1}{\mu(B)} \nu(A)- \frac{1}{\mu(A)} \nu(A)\right\|\le $$
\[\le \frac{1}{\mu(B)} \left\|\nu(B)-\nu(A)\right\| + \left\|\nu(A)\right\| \left| \frac{1}{\mu(B)}-\frac{1}{\mu(A)}\right| \le 2\eps.\]
and this verifies the claim.

Since $S$ is norm-separable in $X^*$, the $weak^*$- closed convex hull $L:=\overline{{\rm co}(S)}^{\w^*}$ is $weak^*$-separable.
Let us check that $L$ fulfills the required properties.

First note that  $L \sub K$. To verify  this  it suffices to check that $S \sub K$. Take any $x^*\in X^* \setminus K$. By the Hahn-Banach
theorem, there is $x\in X$ such that $x^*(x) >\alpha:=\sup\{y^*(x): y^*\in K\}$, therefore
\[
	\left\langle\frac{1}{\mu(B)}\int_B f \; {\rm d}\mu,x\right\rangle
	\stackrel{\eqref{eqn:1}}{=}
	\frac{1}{\mu(B)}\int_B   x  \; {\rm d}\mu \leq \alpha <x^*(x),
\]
for every $B \in {\rm Borel}(K)$ with $\mu(B)>0$. Hence $x^*\not\in S$.

It remains to prove that  $\mu(L)=1$; we achieve it by
 checking that for every $x^*\in K\setminus L$ there is
a $weak^*$-open set $U \sub K$ such that $x^*\in U$ and $\mu(U)=0$. Again, the Hahn-Banach theorem
ensures the existence of $x\in X$ such that
\begin{equation*}
	x^*(x)>\beta:=\sup_{y^*\in S}   y^*(x) \stackrel{\eqref{eqn:1}}{=} \sup\left\{\frac{1}{\mu(B)}\int_B  x  \; {\rm d}\mu: \, B \in {\rm Borel}(K), \, \mu(B)>0 \right\}.
\end{equation*}

Fix $\beta <\gamma <x^*(x)$. Then $x^*$ belongs to the $w^*$-open set $U:=\{y^*\in K:y^*(x)>\gamma\}$
and $\int_U x  \; {\rm d}\mu \geq \gamma\mu(U)$. On the other hand, by the very definition of~$\beta$
we also have $\beta\mu(U)\geq \int_U x  \; {\rm d}\mu$. Then $\beta\mu(U)\geq \gamma\mu(U)$ and so $\mu(U)=0$.
\end{proof}

Let us remark that the proof of Theorem \ref{ap:2} actually shows   the following assertion:
{\em Given a convex $weak^*$-compact subset $K$ of a dual Banach space, any regular Borel probability measure of  countable type defined on $K$ is concentrated on a
 $weak^*$-separable subset of K}.


\begin{thebibliography}{99}
	% NOTE: change the "9" above to "99" if you have MORE THAN 10 references.
	\bibitem{Arv02} A.\ D.\ Arvanitakis, {\em Some remarks on Radon-Nikod\'{y}m compact spaces}, Fund. Math.	 172 (2002),  41--60.
	\bibitem{AK13} A.\ Avil\'{e}s, P.\ Koszmider, {\em A continuous image of a Radon-Nikod\'{y}m compact space which is not Radon-Nikod\'{y}m}, Duke Math. J., 162 (2013), no. 12, 2285--2299.

	\bibitem{BS76} Y.\ Benyamini, T.\ Starbird, {\em  Embedding weakly compact sets into Hilbert space},
	Israel J.\ Math. 23 (1976), 137--141.

	\bibitem{BN07}  P.\ Borodulin-Nadzieja, \emph{Measures on minimally generated Boolean algebras},
	Topology Appl.\  154 (2007), 3107--3124.
	
	\bibitem{Debs} G.\ Debs, \emph{Descriptive aspects of Rosenthal compacta}, Recent Progress in General Topology III, Springer (2014), 205--227.
	
\bibitem{DU77} J.\ Diestel, J.\ J.\ Uhl, jr, {\em Vector measures},
	 Mathematical Surveys 15, Amer. Math. Soc., Providence, R.I. (1977).

\bibitem{FP04}D.\ H.\ Fremlin, G.\ Plebanek, \emph{Independence-precalibers of measure algebras.} preprint (2004); available on Plebanek's and Fremlin's webpages.
	
	%\bibitem{GKP15} S.\ Gabriyelyan, J.\ K\c{a}kol, G.\ Plebanek {\em The Ascoli property for function spaces and the weak topology of Banach and Fr\'echet spaces},
	%Studia Math.  233 (2016), 110--139.
	
	\bibitem{GM12} E.\ {Glasner}, M.\ {Megrelishvili},
	{\em Representations of dynamical systems on Banach spaces not containing $l_{1}$},
	{Trans. Am. Math. Soc.},
	364 (2012),{ no. 12}, {6395--6424}.
	
	
	\bibitem{GM14} E.\ {Glasner}, M.\ {Megrelishvili},
	{\em Eventual nonsensitivity and tame dynamical systems},
	ArXiv e-prints, arXiv:1405.2588, May 2014.
	
	
	\bibitem{HS80} J.\ Hagler, F.\ Sullivan, \emph{Smoothness and $weak^*$ sequential compactness},
	Proc. Amer. Math. Soc. 78 (1980),  497--503.
	
	
	\bibitem{H81} R.\ Haydon. {\em Nonseparable Banach spaces}, Functional analysis: surveys and recent results
	II, North-Holland, Amsterdam (1981),  19--30.
	
	\bibitem{He97} L.\ 	Heindorf, {\em On subalgebras of Boolean interval algebras},
Proc.\ Amer.\ Math.\ Soc.\ 125 (1997),  2265--2274.
	
	
	\bibitem{GK} G.\ Kalai, blog, see
	
	\url{https://gilkalai.wordpress.com/2008/09/28/extremal-combinatorics-iii-some-basic-theorems/}.
	
	
	\bibitem{Ko88} S.\ Koppelberg, \emph{Counterexamples in minimally generated Boolean algebras},
	Acta Univ. Carolin. Math. Phys. 29 (1988),  27--36.
	
	\bibitem{Ko89} S.\ Koppelberg, {\em Minimally generated Boolean algebras}, Order 5 (1989), 393--406.

\bibitem{KP11} M.\ Krupski, G.\ Plebanek, 	
{\em A dichotomy for the convex spaces of probability measures}, Topology Appl. 158 (2011),
2184--2190.

	\bibitem{MC} G.\ {Mart\'{i}nez-Cervantes},
	{ \em On weakly Radon-Nikod{\'y}m compact spaces}, Israel J. Math. (to appear),
	arXiv:1509.05324.
	
	\bibitem{Nam87} I.\ Namioka, {\em Radon-Nikod\'{y}m compact spaces and fragmentability}, Mathematika 34 (1987), no. 2, 258--281.
	
	\bibitem{OSV91}  J.\  Orihuela,  W.\  Schachermayer,  M.\  Valdivia,
	{\em Every Radon-Nikod\'{y}m  Corson  compact space is Eberlein compact}, Studia Math. 98 (1991), no. 2, 157--174.

	\bibitem{Ro74} H. Rosenthal, {\em A characterization of Banach spaces containing $l^1$}, Proc. Nat. Acad. Sci. USA 71 (1974), 2411--2413.

	\bibitem{Sa72}	
	N.\ Sauer,  {\em On the density of families of sets}, J.\  Comb.\  Theory,  13 (1972), 145--147.

	\bibitem{Sh72}
	S.\ Shelah, {\em A combinatorial problem; stability and order for models and theories in infinitary languages},
	Pacific J.\  Math.\  41 (1972), 247--261.

	\bibitem{Ste91}
	 C. Stegall, {\em Spaces of Lipschitz functions on Banach spaces}, Functional analysis (Essen, 1991),
	 Lecture Notes in Pure and Appl. Math., vol. 150, Dekker, New York, 1994, 265--278.
	
	\bibitem{Tal81}
	M. Talagrand, {\em Sur les espaces de Banach contenant $\ell^1(\tau)$}, Israel J. Math. 40 (1981), 324--330.

\end{thebibliography}
\end{document}